\theoremstyle{plain}
\newtheorem{theorem}{Theorem}[section]
\newtheorem{lemma}[theorem]{Lemma}
\newtheorem{corollary}[theorem]{Corollary}
\theoremstyle{definition}
\newtheorem{definition}[theorem]{Definition}
\newtheorem*{definition*}{Definition}
\newtheorem{remark}[theorem]{Remark}
\newtheorem{example}[theorem]{Example}
\newtheorem{hypothesis}[theorem]{Hypothesis}
\numberwithin{equation}{section}
\newcommand{\dom}{\leq _{dom}}
\newcommand{\lex}{\leq _{lex}}
\newcommand{\sbar}{\overline \sigma}
\newcommand{\tbar}{\overline \tau}
\newcommand{\sslash}{\overline \sigma \backslash \overline \sigma _s}
\newcommand{\tslash}{\overline \tau \backslash \overline \tau _1}
\newcommand{\N}{\mathbb{N}}
\newcommand\ov{\overline} 
\newcommand\Kfunc[3]{K^{\sigma,\bar\sigma}_{\tau,\bar\tau}(#1,#2\,|\,#3)}
\newcommand\Hfunc[3]{H^{S}_{T}(#1,#2\,|\,#3)}   
\newcommand{\C}{\mathbb{C}}
\newcommand{\Z}{\mathbb{Z}}
\newcommand{\Gr}{Gr}
\newcommand\mucomp{{\mu^c}}
\newcommand\nucomp{{\nu^c}}
\newcommand\rhocomp{{\rho^c}}
\begin{document}
\title[Skew Schur function differences]{Schur positivity of skew Schur function differences and 
applications to ribbons and Schubert classes}

\author{Ronald C. King, Trevor A. Welsh and Stephanie J. van Willigenburg}
\email{rck@maths.soton.ac.uk}
\address{School of Mathematics,
         University of Southampton,
         University Road,
         Southampton,
         Hampshire,
         SO17 1BJ
         U.K.}

\email{taw@maths.soton.ac.uk}
\address{Department of Physics,
         University of Toronto,
         60 St.\ George Street,
         Toronto,
         Ontario,
         M5S 1A7
         Canada}

\email{steph@math.ubc.ca}
\address{Department of Mathematics,
         University of British Columbia,
         1984 Mathematics Road,
         Vancouver,
         British Columbia,
         V6T 1Z2
         Canada}
\thanks{The third author was supported in part by the National Sciences and Engineering Research Council of Canada.}
\subjclass[2000]{Primary 05E05; 05E10; 14N15} 
\keywords{Jacobi-Trudi determinant, jeu de taquin, ribbon, 
Schubert calculus, 
Schur positive,  skew Schur function, symmetric function} 

\begin{abstract} 
Some new relations on skew Schur function differences are 
established both combinatorially using Sch\"utzenberger's jeu de taquin, and algebraically using Jacobi-Trudi determinants. These relations lead to the conclusion that certain differences
of skew Schur functions are Schur positive. Applying these results 
to a basis of symmetric functions involving ribbon Schur functions confirms the validity of a Schur positivity conjecture due to McNamara. 
A further application reveals that certain differences
of products of Schubert classes are Schubert positive.  
\end{abstract}
\maketitle
\begin{center}\emph{For Manfred Schocker 1970--2006}\end{center}
\section{Introduction}\label{sec:intro} 

Recently there has been much research on Schur positivity of differences of
skew Schur functions, see for example
 \cite{FominFultonLiPoon,  LamPostnikovPylyavskyy, Okounkov}.
In this paper we discover some new Schur positive differences of
skew Schur functions and use them to derive a result on the basis of
symmetric functions consisting of skew Schur functions indexed by certain ribbons,
which is analogous to a well-known result on complete symmetric functions.

This paper is structured as follows.
In the remainder of this section we review the necessary background concerning
tableaux and skew Schur functions.
In Section~\ref{sec:relations} we derive new relations on differences of
skew Schur functions, in particular Lemma~\ref{BasLemSec} and Theorem~\ref{big-difference}. Lemma~\ref{BasLemSec} is the technical heart of the paper and in view of its importance and their independent interest we
offer both
a combinatorial and an algebraic proof. 
The former invokes Sch\"utzenberger's jeu de taquin, while the latter is based on Jacobi-Trudi determinants.
In Section~\ref{sec:positivity} we apply Theorem~\ref{big-difference} to obtain various
new Schur positive differences of skew Schur functions and resolve a
conjecture of McNamara in Theorem~\ref{positive-ribbon-theorem}.
Finally, in Section~\ref{sec:schubert} we show that the preceding results may be used to establish that certain
differences of products of Schubert classes are Schubert positive, in a sense that we define.

\subsection{Partitions and diagrams}\label{sec:partitions_and_diagrams}

Let $\alpha = (\alpha _1,\alpha _2,\ldots ,\alpha _k)$ be a \emph{sequence} of  integers whose sum, $|\alpha|$, is $N$. If the \emph{parts} $\alpha_i$, of $\alpha$, satisfy $\alpha _1,\alpha _2 ,\cdots ,\alpha _{k}>0$ then we say that $\alpha$ is a \emph{composition} of 
$N$, denoted $\alpha \vDash N$. We also say that $\alpha$ has \emph{length} $\ell(\alpha):=k$. If $\alpha _i = \alpha _{i+1} = \cdots = \alpha _{i+j-1} = a$ we normally denote the subsequence $\alpha _i ,\alpha _{i+1}, \ldots , \alpha _{i+j-1}$ by $a^j$. We  denote by $()$ the unique composition of 
$0$. 

For use in some of our proofs later, recall that there exists a bijection between compositions of $N$ and the collection 
$2^{[N-1]}$ of all subsets of $\{1,2,\ldots,N-1\}$ that sends a composition 
$\alpha=(\alpha_1,\ldots,\alpha_{\ell(\alpha)})$ to the set of partial sums
$S(\alpha)=\{\alpha_1,\alpha_1+\alpha_2,\ldots,\alpha_1+\alpha_2+\cdots+\alpha_ {\ell(\alpha)-1} \}$.   If the parts of the composition $\alpha$ satisfy $\alpha _1\geq\alpha _2 \geq\cdots \geq\alpha _{\ell(\alpha)}$ then we say $\alpha$ is a \emph{partition} of $N$, denoted $\alpha \vdash N$. For clarity of exposition we usually denote sequences and compositions by $\alpha, \beta, \sigma , \tau$ and partitions by $\lambda,\mu, \nu$, and  we will use this convention next.

Three partial orders that exist on partitions $\lambda$ and $\mu$ are
\begin{enumerate}
\item the \emph{inclusion order}: $\mu \subseteq \lambda$ if $\mu _i\leq \lambda _i$ for all $1\leq i \leq \ell(\mu)$;
\item the \emph{dominance order} on partitions $\lambda, \mu\vdash N$: $\mu \dom \lambda$ if
$$\mu _1+\cdots +\mu _i \leq \lambda _1+\cdots +\lambda _i $$for all $i$, where if $i>\ell (\lambda)$ (resp. $i>\ell (\mu)$) then $\lambda _i :=0$ (resp. $\mu _i :=0$);
\item the \emph{lexicographic order} on partitions $\lambda, \mu \vdash N$: $\mu \lex \lambda$ if for some $i$ we have $\mu _j = \lambda _j$ for $1\leq j<i$ and $\mu _i < \lambda _i$.
\end{enumerate}

It is not hard to see that the lexicographic order extends the dominance order.  It is also known, see for example \cite{Edelman}, that the cover relations in the dominance order are 
\begin{enumerate}
\item $(\lambda,a,b,\mu)\,\triangleleft_{dom}\,(\lambda,a+1,b-1,\mu)$ for $\lambda_{\ell(\lambda)}>a\geq b>\mu_1$;
\item $(\lambda,a^n,\mu)\,\triangleleft_{dom}\,(\lambda,a+1,a^{n-2},a-1,\mu)$ for
$\lambda_{\ell(\lambda)}>a>\mu_1$ and $n\geq3$.
\end{enumerate} 
For example, $(3,2)\triangleleft _{dom} (4,1)$ and $(2,2,2,2)\triangleleft _{dom} (3,2,2,1)$.

Given a partition $\lambda = (\lambda _1 , \lambda _2 , \ldots , \lambda _{\ell (\lambda )})$ we can associate with it a (\emph{Ferrers} or \emph{Young}) \emph{diagram} also denoted by $\lambda$ that consists of $\lambda _i$ left-justified boxes in row $i$ when read from the top. For ease of referral, boxes will be described by their row and column indices. Furthermore, given two partitions $\lambda , \mu$ such that $\mu\subseteq \lambda$, the \emph{skew diagram} $\lambda / \mu$ is obtained from the diagram $\lambda$ by removing the subdiagram of boxes $\mu$ from the top left corner. In terms of row and column indices
$$\lambda /\mu =\{ (i,j)\, |\, (i,j)\in \lambda, (i,j)\not\in \mu \}.$$
For example, if we denote boxes by $\times$ then
$$(3,3,2,1)/(1,1)=\begin{matrix}
&\times&\times\\
&\times&\times\\
\times&\times\\
\times \end{matrix}\,.$$ Additionally, two skew diagrams will be considered equivalent if one can be obtained from the other by the removal  of empty rows or empty columns. We say a skew diagram is \emph{connected} if it is edgewise connected.

We can describe skew diagrams $\lambda /\mu$ a third way using $1$- and $2$-row overlap 
sequences as defined in \cite{HDL2}. Let $\lambda , \mu$ be diagrams such that $\lambda /\mu$ is a 
skew diagram occupying $\ell (\lambda)$ rows. Then the \emph{$1$-row overlap 
sequence} of $\lambda /\mu$ is
$$r^{(1)}(\lambda /\mu) = (\lambda _1 - \mu _1, \lambda _2 - \mu _2, \ldots , \lambda _{\ell(\mu)}-\mu _{\ell(\mu)}, \lambda _{\ell(\mu)+1}, \ldots ,\lambda _{\ell(\lambda)})$$and the \emph{$2$-row overlap 
sequence} of $\lambda /\mu$ is
$$r^{(2)}(\lambda /\mu) = (\lambda _2 - \mu _1, \lambda _3 - \mu _2, \ldots , \lambda _{\ell(\mu)+1}-\mu _{\ell(\mu)}, \lambda _{\ell(\mu)+2}, \ldots ,\lambda _{\ell(\lambda)}).$$Hence, considering our previous skew diagram $$r^{(1)}((3,3,2,1)/(1,1))=(2,2,2,1), \quad r^{(2)} ((3,3,2,1)/(1,1)) = (2,1,1).$$ Note that this completely describes $\lambda /\mu$ and so we can also denote the skew diagram by 
\begin{equation}
\label{Eq-skew-overlap}
\lambda/\mu=(r^{(1)}(\lambda/\mu)\, |\, r^{(2)}(\lambda/\mu)).
\end{equation}
If $r^{(2)}(\lambda/\mu)=(1^k)$ with  $k=\ell(\lambda)-1$ then we say $\lambda /\mu$ is a \emph{ribbon} (or \emph{border strip} or \emph{rim hook}) and since $r^{(2)}(\lambda/\mu)$ is predetermined we can describe the ribbon completely by the composition $r^{(1)}(\lambda /\mu)$.

One last notion that we need for  diagrams is that of inner and outer corners. If $\lambda$ is a diagram, then we say $(i,j)$ is an \emph{inner corner} of $\lambda$ if $(i,j)\in \lambda$ and $\lambda $ with the box in position $(i,j)$ removed is also a diagram. Similarly we say $(i,j)$ is an \emph{outer corner} of $\lambda$ if $(i,j)\not\in \lambda$ and $\lambda $ with a box appended in position $(i,j)$ is also a diagram.
For example, if $\lambda = (3,3,2,1)$ then the inner corners of $\lambda$ are denoted by $\otimes$ and the outer corners by $\star$:
$$\begin{matrix}
\times&\times&\times&\star\\
\times&\times&\otimes\\
\times&\otimes&\star\\
\otimes&\star\\
\star\end{matrix}\,.$$Inner and outer corners will play a vital role in the next subsection.

\subsection{Tableaux and jeu de taquin}

Consider a skew diagram $\lambda /\mu$. We say we have a \emph{tableau}, $T$, of \emph{shape} $\lambda /\mu$ if each box is filled with a positive integer. In addition, we say we have a \emph{semistandard Young tableau} ({SSYT}, plural {SSYTx}) if
\begin{enumerate}
\item As we read the entries in each row of $T$ from left to right the entries weakly increase;
\item As we read the entries in each column of $T$ from top to bottom the entries strictly increase.
\end{enumerate}
If we read the entries of $T$ from right to left and top to bottom then the resulting word is called the \emph{reading word} of $T$, $w(T)$. If, for each $i$, the number of $i\,$s we have read is always at least the number of $(i+1)\,$s we have read then we say $w(T)$ is \emph{lattice}. Let $c_i(T)$ be the total number of $i\,$s appearing in $T$ and also in $w(T)$, then the list $c(T):=(c_1(T),c_2(T),\ldots)$ is known as the \emph{content} of $T$ and also of $w(T)$. If $T$ is a SSYT with $n$ entries and $c(T)=(1^n)$  then we say $T$ is a \emph{standard Young tableau} ({SYT}, plural {SYTx}). 

Lastly, if we have a SSYT we are able to perform  Sch\"utzenberger's \emph{jeu de taquin} (\emph{jdt}), see for example \cite{Sagan}, on it.

\begin{definition}
Given a SSYT, $T$, of shape $\lambda /\mu$ we perform a \emph{forward-(jdt)-slide} as follows:
\begin{enumerate}
\item Choose an inner corner of $\mu$, $C=(i,j)$.
\item Let $c=\min \{T(i+1, j), T(i, j+1)\}$ or $c=T(i+1, j)$ if $T(i+1, j) = T(i, j+1)$  or if only one of $T(i+1, j), T(i, j+1)$ exists then that is taken to be the minimum value. Let $C'$ be the position $c$ is in.
\item Form $T'$ by setting $T(i,j)=c$ and letting $C'$ be empty.
\item Set $C:=C'$ and return to the second step until $T'$ is a SSYT.
\end{enumerate}

Given a SSYT, $T$, of shape $\lambda /\mu$ we perform a \emph{backward-(jdt)-slide} as follows:
\begin{enumerate}
\item Choose an outer corner of $\lambda$, $D=(i,j)$.
\item Let $d=\max \{T(i-1, j), T(i, j-1)\}$ or $d=T(i-1, j)$ if $T(i-1, j) = T(i, j-1)$  or if only one of $T(i-1, j), T(i, j-1)$ exists then that is taken to be the maximum value. Let $D'$ be the position $d$ is in.
\item Form $T''$ by setting $T(i,j)=d$ and letting $D'$ be empty.
\item Set $D:=D'$ and return to the second step until $T''$ is a SSYT.
\end{enumerate}
\end{definition}

Note the output of each algorithm is a SSYT and that these slides are invertible, as illustrated in the following example.

\begin{example}
Let $T=\begin{smallmatrix}&&1&1&2\\&1&3\\2&4\end{smallmatrix}$ and $C=(1,2)$ then a forward-jdt-slide takes place as shown, where $\bullet$ indicates the position of $C$ at each stage.
$$\begin{matrix}&\bullet &1&1&2\\&1&3\\2&4\end{matrix} \longrightarrow \begin{matrix}&1 &1&1&2\\&\bullet&3\\2&4\end{matrix} \longrightarrow \begin{matrix}&1 &1&1&2\\&3&\bullet\\2&4\end{matrix}. $$

Conversely, let $U=\begin{smallmatrix}&1 &1&1&2\\&3&\\2&4\end{smallmatrix}$ and $D=(2,3)$ then a backward-jdt-slide takes place as shown, where $\bullet$ indicates the position of $D$ at each stage.
$$\begin{matrix}&1 &1&1&2\\&3&\bullet\\2&4\end{matrix}\longrightarrow \begin{matrix}&1 &1&1&2\\&\bullet&3\\2&4\end{matrix} \longrightarrow \begin{matrix}&\bullet &1&1&2\\&1&3\\2&4\end{matrix}. $$
\end{example}

We are now ready to introduce skew Schur functions.

\subsection{The algebra of symmetric functions}

For any set ${\mathcal T}$ of tableaux, we define the
generating function
\begin{equation}\label{tableaux-gen}
g({\mathcal T})=\sum_{T\in\mathcal T} x^{c(T)}
\end{equation}where $x^{c(T)}:= x_1^{c_1(T)}x_2^{c_2(T)}\cdots .$  This generating function can be used to define a basis of the algebra of symmetric functions, $\Lambda$, known as the basis of \emph{Schur functions} $\{s_\lambda \} _{\lambda \vdash N}$ through
\begin{equation}\label{schur-def-monomial}
s_\lambda :=g({\mathcal T}^\lambda)
\end{equation}where ${\mathcal T}^\lambda$ is the set of all SSYTx of shape $\lambda$. Another basis of the algebra of symmetric functions is the basis of \emph{complete symmetric functions} $\{h_\lambda \} _{\lambda \vdash N}$ where $h_0 :=1$,
$$h _\lambda := h _{\lambda _1}h _{\lambda _2}\cdots h _{\lambda _{\ell(\lambda)}}$$and
$$h _k := \sum _{i_1\leq i_2\leq \cdots \leq i_k}x _{i _1}x _{i _2}\cdots x _{i_k}.$$Either of these bases can be used to describe our desired objects of study, skew Schur functions.

Given a skew diagram $\lambda /\mu$ we define the \emph{skew Schur function} by 
\begin{equation}\label{skew-schur-def-monomial}
s_{\lambda /\mu}:=g({\mathcal T}^{\lambda/\mu})
\end{equation}where ${\mathcal T}^{\lambda/\mu}$ is the set of all SSYTx of shape $\lambda /\mu$. For clarity of exposition we will use the less conventional \emph{overlap} notation
\begin{equation}
\label{Eq-sfn-overlap}
s_{\lambda/\mu}=\{\lambda /\mu\}=\{r^{(1)}(\lambda/\mu)\, |\, r^{(2)}(\lambda/\mu)\}
\end{equation}
to state our results from the next section onwards. In terms of complete symmetric functions
\begin{equation}\label{skew-schur-def-h}
s _{\lambda /\mu}=
 \left|\ h_{\lambda _i -\mu _j - i +j}\ \right|_{i,j=1}^{\ell(\lambda)}
\end{equation}
where $\mu _j:=0$ if $j>\ell (\mu)$ and $h_k:=0$ if $k<0$,
see for example \cite{ECII}.
These determinants are known as \emph{Jacobi-Trudi determinants}.
Expanding, instead, in the Schur function basis we have
\begin{equation}\label{skew-schur-def-schur}  s _{\lambda /\mu}=\sum _\nu c^\lambda _{\mu\nu} s_\nu\end{equation}where $c^\lambda _{\mu\nu}$ is the \emph{Littlewood-Richardson coefficient} defined to be the number of SSYTx, $T$, of shape $\lambda /\mu$ where $w(T)$ is lattice and $c(T)=\nu$. This manner of determining 
$c^\lambda _{\mu \nu}$ is called the \emph{Littlewood-Richardson rule}, and clearly yields that $c^\lambda _{\mu \nu}$ is a non-negative integer.

In what follows, we are especially interested in the case where $\lambda /\mu$
is a ribbon.
In such a case, $\lambda/\mu=(\alpha\,|\,1^{\ell(\lambda)-1})$
where $\alpha=r^{(1)}(\lambda/\mu)$,
and we define $r_\alpha=s_{\lambda/\mu}\equiv\{\alpha\,|\,1^{\ell(\lambda)-1}\}$.
We call $r_{\alpha}$ a \emph{ribbon Schur function}.
The set
$\{r_\lambda\}_{\lambda\vdash N}$ forms another basis for the algebra of symmetric
functions \cite[Section 2.1]{HDL}. Ribbon Schur functions are also significant because they are, for example, useful in computing the number of permutations with 
a given cycle structure and descent set \cite{GR}, and they can be used to compute skew Schur 
functions via determinants consisting of  associated ribbon Schur functions \cite{HamelGoulden}.

 
Note that the Jacobi-Trudi determinant ~\eqref{skew-schur-def-h} for the ribbon
Schur
function
$$r_\alpha=\{\alpha_1,\ldots,\alpha_{\ell(\alpha)}\,|\,1^{\ell(\alpha)-1}\}$$takes the form
\begin{equation}
   r_\alpha= \left|\ h_{R_{ij}}\ \right|_{i,j=1}^{\ell(\alpha)}\,
\end{equation}
with
\begin{equation}
   R_{ij}=\left\{
\begin{array}{ll}
 \alpha_i+\alpha_{i+1}+\cdots+\alpha_{j} &\text{if $i\leq j$;}\\
 0 &\text{if $i=j+1$;}\\
 -1 &\text{if $i>j+1$,}\\
\end{array} \right. \,
\end{equation}
so that
\begin{equation}
   r_\alpha=\left|
   \begin{array}{lllll}
   h_{\alpha_1}&h_{\alpha_1+\alpha_2}&h_{\alpha_1+\alpha_2+\alpha_3}&\cdots \\ \\
   1&h_{\alpha_2}&h_{\alpha_2+\alpha_3}&\cdots \\ \\
   0&1&h_{\alpha_3}&\cdots\\ \\
   \vdots&\vdots&\vdots&\ddots\\
   \end{array}
      \right|\,
\end{equation}
where we have substituted $h_0=1$ and $h_k=0$ for $k<0$.

More generally, for $$s_{\lambda/\mu}=\{\alpha_1,\ldots,\alpha_{\ell(\alpha)}\,|\,\beta_1+1,\ldots,\beta_{\ell(\alpha)-1}+1\}$$we have
\begin{equation}
   s_{\lambda/\mu}= \left|\ h_{Q_{ij}}\ \right|_{i,j=1}^{\ell(\alpha)}\,
\end{equation}
with
\begin{equation}\label{GeneralMatrix}
   Q_{ij}=\left\{
\begin{array}{ll}
 (\alpha_i+\alpha_{i+1}+\cdots+\alpha_{j})-(\beta_i+\beta_{i+1}+\cdots+\beta_{j-1}) &\text{if $i\leq j-1$;}\\
 \  \alpha_i &\text{if $i=j$;}\\
 \  \beta_j  &\text{if $i=j+1$;}\\
 -(\alpha_{j+1}+\alpha_{j+2}+\cdots+\alpha_{i-1})+(\beta_j+\beta_{j+1}+\cdots+\beta_{i-1})&\text{if $i> j+1$,}\\
 \end{array} \right. \,
\end{equation}
so that
\begin{equation}\label{GeneralJ-T}
   s_{\lambda/\mu}=\left|
   \begin{array}{lllll}
   h_{\alpha_1}&h_{\alpha_1+\alpha_2-\beta_1}&h_{\alpha_1+\alpha_2+\alpha_3-\beta_1-\beta_2}&\cdots \\ \\
   h_{\beta_1}&h_{\alpha_2}&h_{\alpha_2+\alpha_3-\beta_2}&\cdots \\ \\
   h_{\beta_1+\beta_2-\alpha_2}&h_{\beta_2}&h_{\alpha_3}&\cdots\\ \\             
   h_{\beta_1+\beta_2+\beta_3-\alpha_2-\alpha_3}&h_{\beta_2+\beta_3-\alpha_3}&h_{\beta_3}&\cdots\\
   \vdots&\vdots&\vdots&\ddots\\
   \end{array}
      \right| \,.
   \end{equation}
 
Note that \eqref{skew-schur-def-schur} 
is a non-negative linear combination of Schur functions. This motivates the following definition.

\begin{definition} If a symmetric function $f\in \Lambda$ can be written as a non-negative linear combination of Schur functions then we say that $f$ is \emph{Schur positive}.
\end{definition}

Our goal for the remainder of this paper is to construct new Schur positive expressions. We end our introduction with a classical Schur positive linear combination that will serve as a motivation for some of our later results.

\begin{theorem}\cite[p119]{Macdonald}\label{h-schur-positive} Let $\lambda , \mu \vdash N$ then 
$$h _\mu - h _\lambda$$is Schur positive if and only if $\mu  \dom\lambda$.
\end{theorem}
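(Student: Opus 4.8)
The plan is to translate everything into the Schur basis via the classical expansion $h_\gamma=\sum_\nu K_{\nu\gamma}\,s_\nu$, where $K_{\nu\gamma}$ is the Kostka number counting SSYTx of shape $\nu$ and content $\gamma$; this expansion follows from the Littlewood--Richardson rule applied to $h_\gamma=h_{\gamma_1}\cdots h_{\gamma_{\ell(\gamma)}}=s_{(\gamma_1)}\cdots s_{(\gamma_{\ell(\gamma)})}$. Thus $h_\mu-h_\lambda=\sum_\nu(K_{\nu\mu}-K_{\nu\lambda})\,s_\nu$, and the assertion becomes: $K_{\nu\mu}\geq K_{\nu\lambda}$ for every $\nu\vdash N$ if and only if $\mu\dom\lambda$.

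For necessity I would extract the coefficient of a single well-chosen Schur function. A short counting argument shows that in any SSYT of shape $\nu$ and content $\gamma$ the entries not exceeding $i$ occupy the first $i$ rows, whence $K_{\nu\gamma}\neq0$ forces $\gamma_1+\cdots+\gamma_i\leq\nu_1+\cdots+\nu_i$ for all $i$, that is $\gamma\dom\nu$. Taking $\nu=\lambda$ and noting $K_{\lambda\lambda}=1$, Schur positivity of $h_\mu-h_\lambda$ forces the coefficient $K_{\lambda\mu}-K_{\lambda\lambda}$ of $s_\lambda$ to be nonnegative, so $K_{\lambda\mu}\geq1$; the counting argument applied with $\nu=\lambda$, $\gamma=\mu$ then yields $\mu\dom\lambda$.

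For sufficiency I would exploit that Schur positivity is closed under addition, so it suffices to treat a single covering step and then telescope along a saturated chain. Assuming $\mu\dom\lambda$, choose a saturated chain $\mu=\rho^{(0)},\rho^{(1)},\ldots,\rho^{(m)}=\lambda$ in which each $\rho^{(t)}$ covers $\rho^{(t-1)}$, so that $h_\mu-h_\lambda=\sum_{t=1}^{m}\bigl(h_{\rho^{(t-1)}}-h_{\rho^{(t)}}\bigr)$; it then remains to prove each summand Schur positive. Here I would use the two cover descriptions recorded before the statement. Writing $\pi,\kappa$ for the flanking parts and using that $h_\gamma$ depends only on the multiset of its parts, a covering step of the first type gives $h_{(\pi,a,b,\kappa)}-h_{(\pi,a+1,b-1,\kappa)}=h_\pi h_\kappa\,(h_ah_b-h_{a+1}h_{b-1})$, and of the second type gives $h_{(\pi,a^n,\kappa)}-h_{(\pi,a+1,a^{n-2},a-1,\kappa)}=h_\pi h_\kappa h_a^{\,n-2}\,(h_a^2-h_{a+1}h_{a-1})$. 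The two-by-two Jacobi--Trudi identity \eqref{skew-schur-def-h} gives $h_ah_b-h_{a+1}h_{b-1}=s_{(a,b)}$ for $a\geq b$ (and $s_{(a,a)}$ when $a=b$), a genuine Schur function; the side conditions $a\geq b$ and $n\geq3$ supplied with the covers guarantee these are legitimate partitions and that the exponent $n-2$ is nonnegative. Finally each summand is a product of Schur positive functions (the factors $h_k=s_{(k)}$, a power of $h_a$, and one $s_{(a,b)}$), and a product of Schur positive functions is Schur positive since products expand with nonnegative Littlewood--Richardson coefficients as in \eqref{skew-schur-def-schur}.

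The main obstacle, and the place I would be most careful, is the necessity direction's reliance on the Kostka support lemma $K_{\nu\gamma}\neq0\Rightarrow\gamma\dom\nu$: although its proof is a one-line box-counting argument, it is exactly the ingredient that converts the single inequality on the coefficient of $s_\lambda$ into the dominance conclusion, and getting the direction of the inequality right, so that it reads $\mu\dom\lambda$ rather than its reverse, is easy to mishandle. The sufficiency direction, by contrast, is essentially bookkeeping once one knows that the two listed relations are all of the dominance covers: the only genuine computation is the single two-by-two determinant, which powers both cover types.
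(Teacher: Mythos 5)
Your proof is correct. There is, however, nothing in the paper to compare it against: Theorem~\ref{h-schur-positive} is stated as classical background, cited to \cite[p119]{Macdonald} without proof, and serves only as motivation for the ribbon analogue, Theorem~\ref{positive-ribbon-theorem}. Your argument is the standard one, and it is worth noting that it runs structurally parallel to the proof the paper does give for that ribbon analogue: in both cases necessity is obtained by extracting the coefficient of the single Schur function $s_\lambda$ (you use $K_{\lambda\lambda}=1$ together with the support criterion that $K_{\lambda\mu}\neq0$ forces $\mu\dom\lambda$; the paper uses the distinguished SYT of shape $\lambda$ with descent set $S(\lambda)$), and in both cases sufficiency is reduced, by telescoping along a saturated chain, to the two types of dominance covers recorded in Section~\ref{sec:partitions_and_diagrams}, with each cover difference shown to be Schur positive (you dispose of each cover with the $2\times2$ Jacobi--Trudi determinant $h_ah_b-h_{a+1}h_{b-1}=s_{(a,b)}$ for $a\ge b$; the paper disposes of the corresponding ribbon covers with Theorem~\ref{big-posdifference}). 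Two cosmetic points, neither affecting correctness: the product-positivity fact you invoke at the end is the Littlewood--Richardson expansion of a product, $s_\lambda s_\mu=\sum_\nu c^\nu_{\lambda\mu}s_\nu$, rather than literally \eqref{skew-schur-def-schur}, which is the skew expansion (the coefficients are the same and nonnegative either way); and in cover type (1) one should remark that $b\ge1$, which follows from the condition $b>\mu_1\ge0$ in the cover description, so that $(a,b)$ is a genuine partition.
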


\section{Skew Schur function differences}\label{sec:relations}

In this section we study a variety of differences of skew Schur functions, and moreover discover that some of them are Schur positive. Before we proceed with these differences, we introduce the following hypothesis.

\begin{hypothesis}\label{ovsigma} 
Let $\sigma$ and $\tau$ be compositions such that
 $\ell(\sigma)=s\geq 0$ and $\ell(\tau)=t\geq 0$,
 and let $\ov\sigma$ and $\ov\tau$ be sequences of
 non-negative integers
 that satisfy the following conditions:
\begin{enumerate}
\item The lengths of $\ov\sigma$ and $\ov\tau$ are $s$ and $t$ respectively;
\item $\ov\sigma _s=1$ when $s>0$;
\item $\ov\tau _1=1$ when $t>0$;
\item $\bar\sigma_i\le\min\{\sigma_i,\sigma_{i+1}\}$ for $1\le i<s$, and
$\bar\tau_i\le\min\{\tau_i,\tau_{i-1}\}$ for $1< i\le t$.
\end{enumerate}
\end{hypothesis}

\subsection{A combinatorial approach}

\begin{lemma}\label{BasLemSec}

Assume $\sigma ,\tau ,\ov\sigma ,\ov\tau$ satisfy Hypothesis~\ref{ovsigma}. 
If $m\ge1$, $n\ge2$ and $0\le x\le\min\{m,n-1\}$ then
\begin{equation}\label{BasicEq}
\begin{split}
&\{\sigma,m,n,\tau\,|\,\bar\sigma,x,\bar\tau\}
-\{\sigma,m+1,n-1,\tau\,|\,\bar\sigma,x,\bar\tau\}\\
&\quad=\quad
\begin{cases}
\{\sigma,m,n,\tau\,|\,\bar\sigma,x+1,\bar\tau\}
  &\text{if $x<m$;}\\
-\{\sigma\backslash\sigma_s,m+\sigma_s,n,\tau\,|\,\bar\sigma\backslash\bar\sigma_s,m+1,\bar\tau\}
  &\text{if $x=m$,}
\end{cases}
\\
&\qquad\qquad-
\begin{cases}
\{\sigma,m+1,n-1,\tau\,|\,\bar\sigma,x+1,\bar\tau\}
  &\text{if $x<n-1$;}\\
-\{\sigma,m+1,n-1+\tau_1,\tau\backslash\tau_1\,|\,\bar\sigma,n,\bar\tau\backslash\bar\tau_1\}
  &\text{if $x=n-1$.}
\end{cases}
\end{split}
\end{equation}
If $s=0$ then the term containing
$\sigma\backslash\sigma_s$ is to be omitted.
Similarly, if $t=0$ then the term
containing $\tau\backslash\tau_1$ is to be omitted.\end{lemma}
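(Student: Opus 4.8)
The plan is to work with Jacobi--Trudi determinants, as in \eqref{skew-schur-def-h} and \eqref{GeneralJ-T}. Abbreviate $A_x=\{\sigma,m,n,\tau\,|\,\bar\sigma,x,\bar\tau\}$ and $B_x=\{\sigma,m+1,n-1,\tau\,|\,\bar\sigma,x,\bar\tau\}$, so that the left-hand side of \eqref{BasicEq} is $A_x-B_x$. Writing each as a determinant $\bigl|\,h_{\lambda_i-\mu_j-i+j}\,\bigr|$, the shape $B_x$ is obtained from $A_x$ by moving a single box from the right end of the $n$-row to the left end of the $m$-row; consequently the two matrices agree except that one column is index-shifted and the single adjacent row is index-shifted in the opposite sense, and in particular the sum $m+n$ of the two middle row lengths is preserved. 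The elementary prototype is the two-row computation $\{a,b\,|\,c\}-\{a+1,b-1\,|\,c\}=h_ah_b-h_{a+1}h_{b-1}$, whose right-hand side does not involve $c$; the whole lemma is an amplification of this fact in the presence of the flanking data $\sigma,\bar\sigma,\tau,\bar\tau$.

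First I would establish the \emph{generic recursion}
\[
A_x-B_x=A_{x+1}-B_{x+1}\qquad\text{when } x<m \text{ and } x<n-1,
\]
where all four symbols are honest skew shapes. Increasing the overlap from $x$ to $x+1$ rigidly translates the $n$-row together with the entire $\tau$-block one column to the right; in the Jacobi--Trudi matrix this is a block shift of the indices, by $+1$ on the lower-left off-diagonal block and $-1$ on the upper-right one. I would check that this block shift alters $\det A_x$ and $\det B_x$ by the \emph{same} amount, so that their difference is unchanged. Iterating shows $A_x-B_x$ is constant in $x$ throughout $0\le x\le\min\{m,n-1\}$, which already settles the cases $x<m$ and $x<n-1$ of \eqref{BasicEq}.

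The substance is the two boundary transitions, and this is where I expect the real work. When $x=m$ the first summand on the right of \eqref{BasicEq} can no longer be the honest shape $A_{m+1}$: its overlap $m+1$ exceeds the length $m$ of the upper of the two rows, so the associated $\mu$ fails to be a partition and the naive determinant in fact vanishes. The correct replacement is $-\{\sigma\backslash\sigma_s,m+\sigma_s,n,\tau\,|\,\bar\sigma\backslash\bar\sigma_s,m+1,\bar\tau\}$, obtained by \emph{merging} the $\sigma_s$-row with the $m$-row; here the normalization $\bar\sigma_s=1$ from Hypothesis~\ref{ovsigma} is exactly what makes this a clean merge of two rows into one of length $m+\sigma_s$, and the sign $-1$ records the single row/column transposition needed to reorder the Jacobi--Trudi matrix. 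I would prove this by evaluating $A_m-B_m$ directly and checking that it equals $-\{\sigma\backslash\sigma_s,m+\sigma_s,n,\tau\,|\,\bar\sigma\backslash\bar\sigma_s,m+1,\bar\tau\}$ minus the honest shape in the second summand. The case $x=n-1$ is handled symmetrically: the overlap $n$ now exceeds the length $n-1$ of the lower row, the merge is of the $\tau_1$-row with the $(n-1)$-row using $\bar\tau_1=1$, and the sign is governed by the placement of this term as $-\,\mathrm{term}_2$. The conventions for $s=0$ and $t=0$ follow because then there is no $\sigma_s$- (resp.\ $\tau_1$-) row available to merge, so the relevant term is simply absent.

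The main obstacle is precisely this boundary/merging step: one must show that the degenerate overlap-$(x{+}1)$ configuration is replaced not by zero but by $-1$ times a genuine merged skew shape, and pin down the sign. This requires careful bookkeeping of the Jacobi--Trudi matrix near rows and columns $s+1,s+2$ and at the $\sigma$--$m$ and $n$--$\tau$ junctions; the hypotheses $\bar\sigma_s=\bar\tau_1=1$ together with $\bar\sigma_i\le\min\{\sigma_i,\sigma_{i+1}\}$ and $\bar\tau_i\le\min\{\tau_i,\tau_{i-1}\}$ are what guarantee that a single transposition suffices and that the merged shapes are legitimate. A parallel combinatorial route would replace these determinant manipulations by Sch\"utzenberger's jeu de taquin: a forward slide into the inner corner at the top-left of the $n$-row sets up a content-preserving correspondence between the semistandard tableaux of $A_x$ and those of $B_x$, and the uncancelled tableaux---those whose slide path runs into the $\sigma$- or $\tau$-boundary---should account for exactly the merged terms on the right of \eqref{BasicEq}.
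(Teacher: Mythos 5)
Your proposal is correct and is essentially the paper's own alternative (algebraic) proof of Lemma~\ref{BasLemSec}: your ``generic recursion'' is Lemma~\ref{Lem-Hdiff} --- each Jacobi--Trudi determinant is linear in the single overlap entry $h_{x-1}$, whose cofactor depends on $m,n$ only through $m+n$ and hence cancels in the difference of the two determinants --- and your boundary/merging step is Lemma~\ref{HKlemma}, where at $x=m$ (resp.\ $x=n-1$) the rows (resp.\ columns) $s+1$ and $s+2$ of the determinant agree except for a single entry $1$ (this is exactly where $\bar\sigma_s=1$, $\bar\tau_1=1$ enter), and expansion along the cofactor of that $1$ produces the merged shape with the sign $-1$ you predicted. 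The paper's primary proof is the jeu-de-taquin bijection you only gesture at in your final paragraph, but your determinantal plan, once the two deferred verifications are written out, coincides step for step with the paper's alternative proof.
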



\begin{proof} 
\newcommand{\tabset}[1]{{\mathcal T}^{#1}}
\newcommand{\forwardA}{\xi}
\newcommand{\backwardA}{\zeta}
\newcommand{\forwardC}{\eta}
\newcommand{\backwardC}{\gamma}



In this proof, we use Sch\"utzenberger's jeu de taquin
to
 construct invertible maps between various sets of
tableaux of
 skew shape, and then use the corresponding generating function \eqref{tableaux-gen}, to obtain \eqref{BasicEq} by means of \eqref{skew-schur-def-monomial} and \eqref{Eq-sfn-overlap}. 
 The proof runs through the four cases demarcated by
~\eqref{BasicEq}.
 First we prove the case where $x<m$ and $x<n-1$.
 The other three cases are then treated as variants of
this case. 

Before we begin, we introduce some notation.
For any  skew diagram $\kappa=(\alpha\, |\, \beta)$, let
$\tabset{\kappa}$ denote the set of all SSYTx of shape $\kappa$,
so that $\{\kappa\}=g(\tabset{\kappa})$.

Let $$\forwardA=(\sigma,m,n,\tau\,|\,\bar\sigma,x,\bar\tau)$$
$$\backwardA=(\sigma,m+1,n-1,\tau\,|\,\bar\sigma,x,\bar\tau)$$
and let $T\in\tabset{\forwardA}$.
For the purposes of a forward-jdt-slide, let $C$ be the vacancy
immediately to the left of the first entry in the $(s+1)$th row of $T$.

We now consider our first case, for which $x<m$ and $x<n-1$.
The latter of these constraints implies that the vacancy $C$
has just one node from $T$ below it.
Thus, forward-sliding $C$ through $T$ necessarily results in this
vacancy migrating to the end of either
the $(s+1)$th or $(s+2)$th row,
as depicted in Figs.~\ref{ShiftU1Eq} and \ref{ShiftU2Eq} respectively.


\begin{figure}[ht]
\begin{equation*}
{\setlength{\arraycolsep}{2pt}
\begin{matrix}
&&&&&&&&&&\sigma&\sigma&\sigma\\
&&&&\bullet&\times&\cdots&\times&\times&\times&\times\\
&&\times&\times&\times&\times&\cdots&\times&\times\\
\tau&\tau&\tau
\end{matrix}
\longrightarrow
\begin{matrix}
&&&&&&&&&&\sigma&\sigma&\sigma\\
&&&&\times&\times&\cdots&\times&\times&\times&\bullet\\
&&\times&\times&\times&\times&\cdots&\times&\times\\
\tau&\tau&\tau
\end{matrix}}
\end{equation*}
\caption{Map from $\tabset{\forwardA}$ to
                  ${\mathcal U}_1\subset\tabset{\forwardA^*}$ via a forward-slide}
\label{ShiftU1Eq}
\end{figure}

\begin{figure}[ht]
\begin{equation*}
{\setlength{\arraycolsep}{2pt}
\begin{matrix}
&&&&&&&&&&\sigma&\sigma&\sigma\\
&&&&\bullet&\times&\cdots&\times&\times&\times&\times\\
&&\times&\times&\times&\times&\cdots&\times&\times\\
\tau&\tau&\tau
\end{matrix}
\longrightarrow
\begin{matrix}
&&&&&&&&&&\sigma&\sigma&\sigma\\
&&&&\times&\times&\cdots&\times&\times&\times&\times\\
&&\times&\times&\times&\times&\cdots&\times&\bullet\\
\tau&\tau&\tau
\end{matrix}}
\end{equation*}
\caption{Map from $\tabset{\forwardA}$ to
                  ${\mathcal U}_2\subset\tabset{\backwardA}$ via a forward-slide}
\label{ShiftU2Eq}
\end{figure}
In these two cases, the resulting SSYT is of shape
$\forwardA^*=(\sigma,m,n,\tau\,|\,\bar\sigma\backslash\bar\sigma_s,0,x+1,\bar\tau)$
or shape $\backwardA$ respectively.
Let ${\mathcal U}_1$ and ${\mathcal U}_2$ be the sets of
SSYTx of shapes $\forwardA^*$ and $\backwardA$ respectively that
arise from performing a forward-slide as above on all the
elements of $\tabset{\forwardA}$.
Then $\{\forwardA\}=g({\mathcal U}_1)+g({\mathcal U}_2)$.

However, neither
${\mathcal U}_1$ nor ${\mathcal U}_2$ is the full
set of SSYTx of shape $\forwardA^*$ or $\backwardA$ respectively.
In particular, for each tableau $U\in{\mathcal U}_1$,
the entry at the end of the $(s+1)$th row
(immediately to the left of the vacancy)
is larger than that at the beginning of the $s$th row
(immediately above the vacancy),
because in the preimage $T$ of $U$, the former of these entries
would have been immediately below the latter.
We claim that all SSYTx of shape $\forwardA^*$ that satisfy this
constraint occur in ${\mathcal U}_1$.
To see this, let $U$ be an arbitrary such SSYT.
Performing a backward-jdt-slide on $U$ necessarily
results in a SSYT of shape $\forwardA$, which is thus an element of
$\tabset{\forwardA}$.
That forward-sliding is the inverse of backward-sliding then guarantees
that $U\in{\mathcal U}_1$.


Now form the set ${\mathcal U}_1^<$ of tableaux by,
for each tableau $U\in{\mathcal U}_1$, shifting each entry
in the first $s$ rows of $U$ one position to its left.
This shift is indicated in Fig.~\ref{ShiftU1bEq}.
\begin{figure}[ht]
\begin{equation*}
{\setlength{\arraycolsep}{2pt}
\begin{matrix}
&&&&&&&&&&\sigma&\sigma&\sigma\\
&&&&\times&\times&\cdots&\times&\times&\times&\bullet\\
&&\times&\times&\times&\times&\cdots&\times&\times\\
\tau&\tau&\tau
\end{matrix}
\longrightarrow
\begin{matrix}
&&&&&&&&&\sigma&\sigma&\sigma\\
&&&&\times&\times&\cdots&\times&\times&\times&\bullet\\
&&\times&\times&\times&\times&\cdots&\times&\times\\
\tau&\tau&\tau
\end{matrix}}
\end{equation*}
\caption{Map from ${\mathcal U}_1\subset\tabset{\forwardA^*}$ to
                  ${\mathcal U}_1^<\subset\tabset{\forwardA^+}$
                  via left-shifting the entries $\sigma$}
\label{ShiftU1bEq}
\end{figure}
The elements of ${\mathcal U}_1^<$ are of shape
$$\forwardA^+=(\sigma,m,n,\tau\,|\,\bar\sigma,x+1,\bar\tau).$$
The above constraint on the entries of each element of
${\mathcal U}_1$ ensures that each element of
${\mathcal U}_1^<$ is a SSYT.
Moreover, by reversing the shift, we see that
${\mathcal U}_1^<=\tabset{\forwardA^+}$.
Therefore, $\tabset{\forwardA^+}$ and ${\mathcal U}_1$ are in bijection,
and thus $\{\forwardA^+\}=g({\mathcal U}_1)$.
Consequently, $\{\forwardA\}-\{\forwardA^+\}=g({\mathcal U}_2)$.

Now consider $T\in\tabset{\backwardA}$.
For the purposes of a backward-slide, let $D$ be the vacancy immediately
to the right of the final entry in the $(s+2)$th row of $T$.
The constraint $x<m$ implies that the vacancy $D$
has just one node from $T$ above it.
Thus, backward-sliding $D$ through $T$ necessarily results in this
vacancy migrating to the beginning of either
the $(s+2)$th or $(s+1)$th row.
These two instances are
depicted in Figs.~\ref{ShiftV1Eq} and \ref{ShiftV2Eq} respectively.
\begin{figure}[ht]
\begin{equation*}
{\setlength{\arraycolsep}{2pt}
\begin{matrix}
&&&&&&&&&&\sigma&\sigma&\sigma\\
&&&&\times&\times&\cdots&\times&\times&\times&\times\\
&&\times&\times&\times&\times&\cdots&\times&\bullet\\
\tau&\tau&\tau
\end{matrix}
\longrightarrow
\begin{matrix}
&&&&&&&&&&\sigma&\sigma&\sigma\\
&&&&\times&\times&\cdots&\times&\times&\times&\times\\
&&\bullet&\times&\times&\times&\cdots&\times&\times\\
\tau&\tau&\tau
\end{matrix}}
\end{equation*}
\caption{Map from $\tabset{\backwardA}$ to
                  ${\mathcal V}_1\subset\tabset{\backwardA^*}$ via a backward-slide}
\label{ShiftV1Eq}
\end{figure}
\begin{figure}[ht]
\begin{equation*}
{\setlength{\arraycolsep}{2pt}
\begin{matrix}
&&&&&&&&&&\sigma&\sigma&\sigma\\
&&&&\times&\times&\cdots&\times&\times&\times&\times\\
&&\times&\times&\times&\times&\cdots&\times&\bullet\\
\tau&\tau&\tau
\end{matrix}
\longrightarrow
\begin{matrix}
&&&&&&&&&&\sigma&\sigma&\sigma\\
&&&&\bullet&\times&\cdots&\times&\times&\times&\times\\
&&\times&\times&\times&\times&\cdots&\times&\times\\
\tau&\tau&\tau
\end{matrix}}
\end{equation*}
\caption{Map from $\tabset{\backwardA}$ to
                  ${\mathcal V}_2\subset\tabset{\forwardA}$ via a backward-slide}
\label{ShiftV2Eq}
\end{figure}
In these two cases, the resulting SSYT is of shape
$\backwardA^*=(\sigma,m+1,n-1,\tau\,|\,\bar\sigma,x+1,0,\bar\tau\backslash\bar\tau_1)$
or shape $\forwardA$ respectively.
Let ${\mathcal V}_1$ and ${\mathcal V}_2$ be the sets of
SSYTx of shapes $\backwardA^*$ and $\forwardA$ respectively
that result from performing a backward-slide as above on all
the elements of $\tabset{\backwardA}$.
Then $\{\backwardA\}=g({\mathcal V}_1)+g({\mathcal V}_2)$.

In analogy with the treatment of ${\mathcal U}_1$ above,
for each element $V\in{\mathcal V}_1$ 
we shift each of the entries in rows $s+3$ and below one position to the right,
as indicated in Fig.~\ref{ShiftV1b}.
\begin{figure}[ht]
\begin{equation*}
{\setlength{\arraycolsep}{2pt}
\begin{matrix}
&&&&&&&&&&\sigma&\sigma&\sigma\\
&&&&\times&\times&\cdots&\times&\times&\times&\times\\
&&\bullet&\times&\times&\times&\cdots&\times&\times\\
\tau&\tau&\tau
\end{matrix}
\longrightarrow
\begin{matrix}
&&&&&&&&&&\sigma&\sigma&\sigma\\
&&&&\times&\times&\cdots&\times&\times&\times&\times\\
&&\bullet&\times&\times&\times&\cdots&\times&\times\\
&\tau&\tau&\tau
\end{matrix}
}
\end{equation*}
\caption{Map from ${\mathcal V}_1\subset\tabset{\backwardA^*}$ to
                  ${\mathcal V}_1^>\subset\tabset{\backwardA^+}$
                  via right-shifting the entries $\tau$}
\label{ShiftV1b}
\end{figure}
This forms a set ${\mathcal V}_1^>$ of SSYTx of shape
$$\backwardA^+=(\sigma,m+1,n-1,\tau\,|\,\bar\sigma,x+1,\bar\tau).$$
This yields $\{\backwardA^+\}=g({\mathcal V}_1)$, and
consequently, $\{\backwardA\}-\{\backwardA^+\}=g({\mathcal V}_2).$

Now, backward-sliding maps the elements of
${\mathcal U}_2\subset\tabset{\backwardA}$
bijectively onto a subset of ${\mathcal V}_2$ because
backward-sliding is inverse to forward-sliding.
Similarly, forward-sliding maps the elements of
${\mathcal V}_2\subset\tabset{\forwardA}$ bijectively
onto a subset of ${\mathcal U}_2$ because
forward-sliding is inverse to backward-sliding.
It follows that ${\mathcal U}_2$ and ${\mathcal V}_2$ are in bijection,
and thus $g({\mathcal U}_2)=g({\mathcal V}_2)$.
Consequently, $\{\forwardA\}-\{\forwardA^+\}=\{\backwardA\}-\{\backwardA^+\}$,
which yields the case of the lemma for which $x<m$ and $x<n-1$.


We now consider the case in which $x<m$ and $x=n-1$.
In this case, for $T\in\tabset{\forwardA}$, the vacancy $C$ has
more than one entry from $T$ below it
(assume for now that $\tau$ is not empty).
Then, on performing a forward-slide, the vacancy migrates to one
of three positions.
The first two are, as in the first case above, at the ends of
rows $s+1$ and $s+2$.
The third position is at the bottom of the column
that intially contained $C$.
This instance is depicted in Fig.~\ref{ShiftU3Eq}.
\begin{figure}[ht]
\begin{equation*}
{\setlength{\arraycolsep}{2pt}
\begin{matrix}
\phantom{\times}&\phantom{\times}&\phantom{\times}&&&&&&&&&
\sigma&\sigma&\sigma\\
&&&\bullet&\times&\times&\times&\cdots&\times&\times&\times&\times\\
&&&\times&\times&\times&\times&\cdots&\times&\times\\
&&\tau&\tau\\
&\tau&\tau&\tau\\
&\tau&\tau&\tau\\
\tau&\tau
\end{matrix}
\longrightarrow
\begin{matrix}
\phantom{\times}&\phantom{\times}&\phantom{\times}&&&&&&&&&
\sigma&\sigma&\sigma\\
&&&\times&\times&\times&\times&\cdots&\times&\times&\times&\times\\
&&&\tau&\times&\times&\times&\cdots&\times&\times\\
&&\tau&\tau\\
&\tau&\tau&\tau\\
&\tau&\tau&\bullet\\
\tau&\tau
\end{matrix}}
\end{equation*}
\caption{Map from $\tabset{\forwardA}$ to
                  ${\mathcal U}_3$ via a forward-slide}
\label{ShiftU3Eq}
\end{figure}
Let ${\mathcal U}_3$ be the set of all SSYTx that result in this third case.
As above,
${\mathcal U}_1$ and ${\mathcal U}_2$ are defined to be the sets of
SSYTx of shapes $\forwardA^*$ and $\backwardA$ respectively that result from
the first two cases.
Then $\{\forwardA\}=g({\mathcal U}_1)+g({\mathcal U}_2)+g({\mathcal U}_3)$.
Moreover, we obtain $\{\forwardA^+\}=g({\mathcal U}_1)$ as in the
first case, resulting in
$\{\forwardA\}-\{\forwardA^+\}-g({\mathcal U}_3)=g({\mathcal U}_2)$.

Note that, for $U\in{\mathcal U}_3$,
each entry in the same column of $U$ as the vacancy
is greater than or equal to the entry (if there is one)
below and to its left.
This follows because $U$ was obtained from a SSYT
$T\in\tabset{\forwardA}$ by a sequence of slides that move downwards.

Now form the set ${\mathcal U}_3^\wedge$ by, for each
$U\in{\mathcal U}_3$, shifting up one position each entry
in all the columns to the left of that of the vacancy.
This shift is depicted in Fig.~\ref{ShiftU3UEq}.
\begin{figure}[ht]
\begin{equation*}
{\setlength{\arraycolsep}{2pt}
\begin{matrix}
\phantom{\times}&\phantom{\times}&\phantom{\times}&&&&&&&&&
\sigma&\sigma&\sigma\\
&&&\times&\times&\times&\times&\cdots&\times&\times&\times&\times\\
&&&\tau&\times&\times&\times&\cdots&\times&\times\\
&&\tau&\tau\\
&\tau&\tau&\tau\\
&\tau&\tau&\bullet\\
\tau&\tau
\end{matrix}
\longrightarrow
\begin{matrix}
\phantom{\times}&\phantom{\times}&\phantom{\times}&&&&&&&&&
\sigma&\sigma&\sigma\\
&&&\times&\times&\times&\times&\cdots&\times&\times&\times&\times\\
&&\tau&\tau&\times&\times&\times&\cdots&\times&\times\\
&\tau&\tau&\tau\\
&\tau&\tau&\tau\\
\tau&\tau&&\bullet\\
\strut
\end{matrix}}
\end{equation*}
\caption{Map from ${\mathcal U}_3$ to
                  ${\mathcal U}_3^\wedge\subset\tabset{\forwardC}$
                  via up-shifting of some entries $\tau$}
\label{ShiftU3UEq}
\end{figure}
Each of the resulting elements of ${\mathcal U}_3^\wedge$ is of shape
$$\forwardC=(\sigma,m+1,n-1+\tau_1,\tau\backslash\tau_1\,|\,\bar\sigma,x+1,\bar\tau\backslash\bar\tau_1).$$
In view of the above note, each of these elements is a SSYT.
Indeed, ${\mathcal U}_3^\wedge$ is in bijection with
$\tabset{\forwardC}$, as may be seen by, in each element of the latter,
shifting downward each of the entries in the columns to the
left of that of the vacancy and then performing a backward-slide.
Therefore, $\{\forwardC\}=g({\mathcal U}_3^\wedge)=g({\mathcal U}_3)$.
Then, from above,
$\{\forwardA\}-\{\forwardA^+\}-\{\forwardC\}=g({\mathcal U}_2)$.
When $\tau$ is empty, the sets
$g({\mathcal U}_3)$ and $g({\mathcal U}_3^\wedge)$ are empty,
and the correct expression is given upon setting $\{\forwardC\}=0$.

Now, as in the first case, consider $T\in\tabset{\backwardA}$, and
for the purposes of a backward-slide, let $D$ be the vacancy immediately
to the right of the final entry in the $(s+2)$th row of $T$.
However, $x=n-1$, here, implies that the $(s+1)$th and $(s+2)$th rows
of $T$ are flush at their left edges, and therefore
terms depicted in Fig. \ref{ShiftV1Eq} do not arise.
Consequently, $\{\backwardA\}=g({\mathcal V}_2)$ where,
as in the first case, ${\mathcal V}_2$ is the set of
SSYTx of shape $\forwardA$ that results from performing a
backward-slide on all the elements of $\tabset{\backwardA}$.

As in the first case, ${\mathcal U}_2$ and ${\mathcal V}_2$
are in bijection, whence $g({\mathcal U}_2)=g({\mathcal V}_2)$.
Thereupon, $\{\forwardA\}-\{\forwardA^+\}-\{\forwardC\}=\{\backwardA\}$,
and the required expression in the case for
which $x<m$ and $x=n-1$ follows.


The case for which $x=m$ and $x<n-1$ is similar to the case just
considered, but rotated $180^\circ$.
Here, rows $(s+1)$ and $(s+2)$ of $T\in\tabset{\forwardA}$ are
flush at their right ends.
Consequently, forward-sliding $C$ through $T$
necessarily results in this vacancy migrating to the end of
the $(s+2)$th row, as in Fig.~\ref{ShiftU2Eq}.
The resulting SSYT is then necessarily of shape $\backwardA$.
With the set ${\mathcal U}_2$ formed by enacting this
forward-slide on all elements of $\tabset{\forwardA}$,
we obtain $\{\forwardA\}=g({\mathcal U}_2)$.

Now, as in the previous cases, consider $T\in\tabset{\backwardA}$, and
let $D$ be the vacancy immediately
to the right of the final entry in the $(s+2)$th row of $T$.
In this instance, though, $D$ has more than one entry of $T$
above it (provided that $\sigma$ is not empty).
Thus in addition to the two terms arising from the migration
of $D$ to the beginnning of rows $(s+1)$ and $(s+2)$,
as in Fig.~\ref{ShiftV1Eq} and Fig.~\ref{ShiftV2Eq} respectively,
there is a term arising from the migration
of $D$ directly upwards, as in Fig.~\ref{ShiftV3Eq}.
\begin{figure}[ht]
\begin{equation*}
{\setlength{\arraycolsep}{2pt}
\begin{matrix}
&&&&&&&&&&&&\sigma\\
&&&&&&&&&\sigma&\sigma&\sigma&\sigma\\
&&&&&&&&&\sigma&\sigma&\sigma\\
&&&&\times&\times&\cdots&\times&\times&\times\\
&&\times&\times&\times&\times&\cdots&\times&\times&\bullet\\
\tau&\tau&\tau
\end{matrix}
\longrightarrow
\begin{matrix}
&&&&&&&&&&&&\sigma\\
&&&&&&&&&\bullet&\sigma&\sigma&\sigma\\
&&&&&&&&&\sigma&\sigma&\sigma\\
&&&&\times&\times&\cdots&\times&\times&\sigma\\
&&\times&\times&\times&\times&\cdots&\times&\times&\times\\
\tau&\tau&\tau
\end{matrix}}
\end{equation*}
\caption{Map from $\tabset{\backwardA}$ to
                  ${\mathcal V}_3$ via a backward-slide}
\label{ShiftV3Eq}
\end{figure}
Let ${\mathcal V}_3$ be the set of all SSYTx that result in this
third instance.
As in the first case,
${\mathcal V}_1$ and ${\mathcal V}_2$ are defined to be the sets of
SSYTx of shapes $\backwardA^*$ and $\forwardA$ respectively that result from
the first two instances.
Then $\{\backwardA\}=g({\mathcal V}_1)+g({\mathcal V}_2)+g({\mathcal V}_3)$.
Moreover, $\{\backwardA^+\}=g({\mathcal V}_1)$ as in the first case,
resulting in
$\{\backwardA\}-\{\backwardA^+\}-g({\mathcal V}_3)=g({\mathcal V}_2)$.

The elements of ${\mathcal V}_3$ are now treated in analogy
with the treatment of ${\mathcal U}_3$ above.
Namely, we form the set ${\mathcal V}_3^\vee$ by,
for each tableau $V\in{\mathcal V}_3$, shifting downward
each entry in the columns of $V$ to the right of the vacancy.
This shift is depicted in Fig.~\ref{ShiftV3DEq}.
\begin{figure}[ht]
\begin{equation*}
{\setlength{\arraycolsep}{2pt}
\begin{matrix}
&&&&&&&&&&&&\sigma\\
&&&&&&&&&\bullet&\sigma&\sigma&\sigma\\
&&&&&&&&&\sigma&\sigma&\sigma\\
&&&&\times&\times&\cdots&\times&\times&\sigma\\
&&\times&\times&\times&\times&\cdots&\times&\times&\times\\
\tau&\tau&\tau
\end{matrix}
\longrightarrow
\begin{matrix}
\\
&&&&&&&&&\bullet&&&\sigma\\
&&&&&&&&&\sigma&\sigma&\sigma&\sigma\\
&&&&\times&\times&\cdots&\times&\times&\sigma&\sigma&\sigma\\
&&\times&\times&\times&\times&\cdots&\times&\times&\times\\
\tau&\tau&\tau
\end{matrix}}
\end{equation*}
\caption{Map from ${\mathcal V}_3$ to
                  ${\mathcal V}_3^\vee\subset\tabset{\backwardC}$
                  via down-shifting of some entries $\sigma$}
\label{ShiftV3DEq}
\end{figure}
We see that each element of ${\mathcal V}_3^\vee$ is a SSYT of shape
$$\backwardC=(\sigma\backslash\sigma_s,m+\sigma_s,n,\tau\,|\,\bar\sigma\backslash\bar\sigma_s,m+1,\bar\tau).$$
Indeed, by reversing the above construction, we obtain
${\mathcal V}_3^\vee=\tabset{\backwardC}$.
Therefore, $g({\mathcal V}_3^\vee)=g({\mathcal V}_3)=\{\backwardC\}$.
It now follows that
$\{\backwardA\}-\{\backwardA^+\}-\{\backwardC\}=g({\mathcal V}_2)$,
where we set $\{\backwardC\}=0$ if $\sigma$ is empty.

As in the previous cases, ${\mathcal U}_2$ and ${\mathcal V}_2$
are in bijection, whence $g({\mathcal U}_2)=g({\mathcal V}_2)$,
and thus
$\{\forwardA\}=\{\backwardA\}-\{\backwardA^+\}-\{\backwardC\}$.
This yields the required result in the case for which $x=m$ and $x<n-1$.


The case for which $x=m$ and $x=n-1$ is an amalgam of the two previous
cases.
Here, rows $(s+1)$ and $(s+2)$ of $T\in\tabset{\forwardA}$ are
flush at their right ends, but the vacancy $C$ has more than
one entry from $T$ below it.
Consequently, forward-sliding $C$ through $T$
necessarily results in this vacancy migrating to the end of
the $(s+2)$th row, as in Fig.~\ref{ShiftU2Eq},
or to the bottom of the column which originally contained $C$,
as in Fig.~\ref{ShiftU3Eq}.
Then, in comparison with the case for which $x<m$ and $x=n-1$,
the set ${\mathcal U}_1$ does not arise, and we obtain
$\{\forwardA\}-\{\forwardC\}=g({\mathcal U}_2)$.
Here again, if $\tau$ is empty, we set $\{\forwardC\}=0$.

Similarly, the backward-sliding process in this case is as in the case
for which $x=m$ and $x<n-1$, except that the set
${\mathcal V}_1$ does not arise, and consequently we obtain
$\{\backwardA\}-\{\backwardC\}=g({\mathcal V}_2)$.
Here again, if $\sigma$ is empty, we set $\{\backwardC\}=0$.

The familiar bijection between ${\mathcal U}_2$ and ${\mathcal V}_2$
then implies that
$\{\forwardA\}-\{\forwardC\}=\{\backwardA\}-\{\backwardC\}$,
giving the required result in this $x=m=n-1$ case,
thereby completing the proof of Lemma~\ref{BasLemSec}.
\end{proof}




In order to generalise this result and accommodate special cases of the
type appearing in \eqref{BasicEq}, it is convenient to introduce 
the skew Schur functions
$\Kfunc{m}{n}{x}$ that, for positive integers $m,n\geq1$, 
$0\le x\le\min\{m,n\}+1$ and
$\sigma,\tau,\ov\sigma,\ov\tau$ satisfying Hypothesis~\ref{ovsigma},
are defined as follows:
\begin{equation}
\label{Eq-K}
\Kfunc{m}{n}{x}=
\begin{cases}
\{\sigma,m,n,\tau\,|\,\bar\sigma,x,\bar\tau\}
  &\text{if $x\le m$ and $x\le n$;}\\
-\{\sigma\backslash\sigma_s,m+\sigma_s,n,\tau\,|\,
            \bar\sigma\backslash\bar\sigma_s,x,\bar\tau\}
  &\text{if $x=m+1$ and $x\le n$;}\\
-\{\sigma,m,n+\tau_1,\tau\backslash\tau_1\,|\,
            \bar\sigma,x,\bar\tau\backslash\bar\tau_1\}
  &\text{if $x\le m$ and $x=n+1$;}\\
\{\sigma\backslash\sigma_s,m+\sigma_s,n+\tau_1,\tau\backslash\tau_1\,|\,
            \bar\sigma\backslash\bar\sigma_s,x,\bar\tau\backslash\bar\tau_1\}
  &\text{if $x=m+1$ and $x=n+1$.}
\end{cases}
\end{equation}
If $s=0$ then the term containing
$\sigma\backslash\sigma_s$ is to be set to 0.
Similarly, if $t=0$ then the term
containing $\tau\backslash\tau_1$ is to be set to 0.

\begin{lemma}\label{GenLem}
Let $m,n,m',n'$ be positive integers for which $m+n=m'+n'$
and let $x,x'$ be non-negative integers for which
$x,x'\le\min\{m,n,m',n'\}+1$ then
\begin{equation}\label{BasCorEq}
\Kfunc{m}{n}{x}-\Kfunc{m'}{n'}{x}= \Kfunc{m}{n}{x'}-\Kfunc{m'}{n'}{x'}.
\end{equation}
\end{lemma}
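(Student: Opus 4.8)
The plan is to recast Lemma~\ref{BasLemSec} purely in the notation of \eqref{Eq-K}, so that it asserts the $x$-independence of a single difference of $K$-symbols, and then to deduce \eqref{BasCorEq} by telescoping along a chain of pairs of constant sum.

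First I would translate \eqref{BasicEq}. Under the hypotheses $m\ge1$, $n\ge2$, $0\le x\le\min\{m,n-1\}$ the two terms on the left of \eqref{BasicEq} are $\Kfunc{m}{n}{x}$ and $\Kfunc{m+1}{n-1}{x}$ by the first case of \eqref{Eq-K}. On the right, I would check that both forms of the first bracketed term collapse to $\Kfunc{m}{n}{x+1}$: the form for $x<m$ via the first case of \eqref{Eq-K}, and the form for $x=m$ via the second case (which applies since $x\le n-1$ gives $m+1\le n$). Likewise both forms of the second bracketed term collapse to $\Kfunc{m+1}{n-1}{x+1}$, with the degenerate $s=0$ and $t=0$ conventions matching on both sides. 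Thus Lemma~\ref{BasLemSec} becomes exactly
\[
\Kfunc{m}{n}{x}-\Kfunc{m+1}{n-1}{x}=\Kfunc{m}{n}{x+1}-\Kfunc{m+1}{n-1}{x+1},
\]
valid for $m\ge1$, $n\ge2$, $0\le x\le\min\{m,n-1\}$; equivalently, $\Kfunc{m}{n}{x}-\Kfunc{m+1}{n-1}{x}$ is independent of $x$ for $0\le x\le\min\{m,n-1\}+1$.

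Next I would telescope. Assuming without loss of generality that $m\le m'$ (so $n\ge n'$), the intermediate pairs $(m+k,n-k)$ for $0\le k\le m'-m$ interpolate between $(m,n)$ and $(m',n')$, and I would write
\[
\Kfunc{m}{n}{x}-\Kfunc{m'}{n'}{x}=\sum_{k=0}^{m'-m-1}\Bigl(\Kfunc{m+k}{n-k}{x}-\Kfunc{m+k+1}{n-k-1}{x}\Bigr).
\]
By the recast lemma each summand is independent of $x$ on its own range, so the whole difference is; replacing $x$ by $x'$ and invoking the same constancy then gives \eqref{BasCorEq}. The case $m=m'$ forces $n=n'$ and is trivial.

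The main obstacle will be the bookkeeping of overlapping validity ranges. I would verify that $\min\{m,n,m',n'\}=\min\{m,n'\}$, and that for every $k$ with $0\le k\le m'-m-1$ one has $\min\{m,n'\}\le\min\{m+k,(n-k)-1\}$: here $m\le m+k$ is immediate, while $(n-k)-1\ge n'$ because $(n-k)-1$ is least at $k=m'-m-1$, where it equals $(m+n)-m'=n'$. Hence the range $0\le x\le\min\{m,n,m',n'\}+1$ of the lemma sits inside the $x$-independence range $0\le x\le\min\{m+k,(n-k)-1\}+1$ of every summand, so each summand is genuinely constant in $x$ there. I would also check $n-k\ge n'+1\ge2$ and $m+k\ge1$, so the hypotheses of the recast lemma hold for each summand. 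The only real delicacy is this interplay of ranges, together with confirming in the translation step that the boundary subcases $x=m$ and $x=n-1$ of \eqref{BasicEq} are uniformly absorbed into the single symbols $\Kfunc{m}{n}{x+1}$ and $\Kfunc{m+1}{n-1}{x+1}$.
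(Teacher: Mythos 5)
Your proposal is correct and follows essentially the same route as the paper: it first rewrites Lemma~\ref{BasLemSec} via \eqref{Eq-K} as the statement that $\Kfunc{p}{q}{y}-\Kfunc{p+1}{q-1}{y}$ is independent of $y$, and then telescopes over the intermediate pairs $(m+k,n-k)$ to transfer this constancy to the general difference. The only difference is that you spell out the range bookkeeping (that $\min\{m,n,m',n'\}+1$ lies inside each summand's validity range) which the paper leaves implicit.
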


\begin{proof}
We first write the equation \eqref{BasicEq} in the form
\begin{equation}\label{BasLemEq}
\Kfunc{p}{q}{y}-\Kfunc{p+1}{q-1}{y}= \Kfunc{p}{q}{y+1}-\Kfunc{p+1}{q-1}{y+1}
\end{equation}
where $p\ge1$, $q\ge2$, and $0\le y\le\min\{p,q-1\}$.
If, in addition, $y<y'\le\min\{p+1,q\}$, then repeated use of \eqref{BasLemEq}
yields
\begin{equation}\label{BasLemEq2}
\Kfunc{p}{q}{y}-\Kfunc{p+1}{q-1}{y}= \Kfunc{p}{q}{y'}-\Kfunc{p+1}{q-1}{y'}.
\end{equation}

Without loss of generality, we only need to prove \eqref{BasCorEq}
in the case for which $m<m'$ and $x<x'$. In this case
\begin{equation}\label{BasCorPf1}
\begin{split}
&\Kfunc{m}{n}{x}-\Kfunc{m'}{n'}{x}\\
&\qquad\qquad
=\sum_{i=m}^{m'-1}\left(\Kfunc{i}{m+n-i}{x}-\Kfunc{i+1}{m+n-i-1}{x}\right)\\
&\qquad\qquad
=\sum_{i=m}^{m'-1}\left(\Kfunc{i}{m+n-i}{x'}-\Kfunc{i+1}{m+n-i-1}{x'}\right)\\
&\qquad\qquad
=\Kfunc{m}{n}{x'}-\Kfunc{m'}{n'}{x'}\,
\end{split}
\end{equation}
where \eqref{BasLemEq2} has been used to convert each summand of the
first sum into the corresponding summand of the second sum.
This proves \eqref{BasCorEq}.
\end{proof}

\subsection{An algebraic approach}

As an alternative to the above combinatorial approach to Lemmas~\ref{BasLemSec} 
and~\ref{GenLem}, an algebraic approach may be based on the use of
the Jacobi-Trudi determinants, \eqref{skew-schur-def-h} or \eqref{GeneralJ-T}, which
express a skew Schur function in terms of complete symmetric functions.

First, it is convenient to introduce the following family of 
determinants:
\begin{equation}
\label{Eq-H}
\begin{array}{l}
\Hfunc{m}{n}{z}=\\
\\
\left|
\begin{array}{llllllll}
h_{S_{1,1}}&\cdots&h_{S_{1,s}}&h_{S_{1,s}+m}
&h_{S_{1,s}+m+n-z}&h_{S_{1,s}+m+n-z+T_{1,1}}&\cdots&h_{S_{1,s}+m+n-z+T_{1,t}}\\
\vdots&\ddots&\vdots&\vdots&\vdots&\vdots&\ddots&\vdots\\
h_{S_{s,1}}&\cdots&h_{S_{s,s}}&h_{S_{s,s}+m}
&h_{S_{s,s}+m+n-z}&h_{S_{s,s}+m+n-z+T_{1,1}}&\cdots&h_{S_{s,s}+m+n-z+T_{1,t}}\\
0&\cdots&1&h_{m}
&h_{m+n-z}&h_{m+n-z+T_{1,1}}&\cdots&h_{m+n-z+T_{1,t}}\\
0&\cdots&0&h_{z}
&h_{n}&h_{n+T_{1,1}}&\cdots&h_{n+T_{1,t}}\\
0&\cdots&0&0
&1&h_{T_{1,1}}&\cdots&h_{T_{1,t}}\\
\vdots&\ddots&\vdots&\vdots
&\vdots&\vdots&\ddots&\vdots\\
0&\cdots&0&0
&0&h_{T_{t,1}}&\cdots&h_{T_{t,t}}\\
\end{array}
\right|
\end{array}
\end{equation}
where $S$ and $T$ are $s\times s$ and $t\times t$ matrices of integers, with
$s$ and $t$ non-negative integers, while $m$ and $n$ are positive integers and $z$
is any integer. As usual $h_k=0$ for $k<0$ and $h_0=1$. It is to be understood
that any sequence $0 \cdots 1$, whether vertical or horizontal, is a sequence 
of $0\,$s followed by a single $1$ in the position indicated.

With this definition we have the following:

\begin{lemma}
\label{Lem-Hdiff}
Let $m,n,m',n'$ be positive integers such that $m+n=m'+n'$. Then the difference
\begin{equation}\label{Eq-Hdiff}
\Hfunc{m}{n}{z}-\Hfunc{m'}{n'}{z}
\end{equation}
is independent of $z$ for all integers $z$. 
\end{lemma}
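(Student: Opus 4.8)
The plan is to show that the entire dependence of $\Hfunc{m}{n}{z}$ on $z$ can be quarantined into a single additive term that depends on $m$ and $n$ only through the sum $m+n$; that term will then be identical for the two parameter choices with $m+n=m'+n'$ and so cancel in the difference \eqref{Eq-Hdiff}, leaving something manifestly free of $z$.

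First I would locate every occurrence of $z$ in the determinant \eqref{Eq-H}. It appears in exactly two places: as the single entry $h_z$ in row $s+2$, column $s+1$, and as the shift $m+n-z$ throughout the rectangular block occupying rows $1,\dots,s+1$ and columns $s+2,\dots,s+t+2$. Crucially, in that block $z$ enters only via $m+n-z$, hence only through $N:=m+n$; by the same inspection $m$ occurs on its own only in column $s+1$ (rows $1,\dots,s+1$), and $n$ occurs on its own only in row $s+2$ (columns $s+2,\dots,s+t+2$). Using multilinearity in column $s+1$, I would then write that column as $u+h_z\,e_{s+2}$, where $u$ carries the $m$-entries and has a $0$ in row $s+2$. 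This splits the determinant as $\Hfunc{m}{n}{z}=G_0-h_z\,\Phi$, where $G_0$ is the determinant obtained from \eqref{Eq-H} by replacing the $(s+2,s+1)$ entry $h_z$ with $0$, and $\Phi$ is, up to sign, the minor deleting row $s+2$ and column $s+1$.

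The crux is two structural observations. First, in $G_0$ the whole lower-left block (rows $s+2,\dots,s+t+2$ against columns $1,\dots,s+1$) is now zero: columns $1,\dots,s$ already vanish on those rows, and column $s+1$ vanishes there once its $(s+2,s+1)$ entry is set to $0$. Hence $G_0$ is block upper-triangular and factors as the product of its top-left $(s+1)\times(s+1)$ minor $A$, which depends only on $S$ and $m$, and its bottom-right $(t+1)\times(t+1)$ minor $C$, which depends only on $T$ and $n$; in particular $G_0$ is entirely independent of $z$. Second, deleting row $s+2$ and column $s+1$ strips away precisely the entries in which $m$ and $n$ appear individually, so $\Phi$ depends on $m,n$ only through $N=m+n$ (and on $z$). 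Therefore, when $m+n=m'+n'=N$ the terms $h_z\,\Phi$ coincide for the two parameter choices and cancel, giving $\Hfunc{m}{n}{z}-\Hfunc{m'}{n'}{z}=\det A(m)\,\det C(n)-\det A(m')\,\det C(n')$, which contains no $z$ whatsoever.

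The hard part will not be any computation but getting the bookkeeping exactly right: confirming that $z$ really occurs only in the two claimed positions, that the lower-left block of $G_0$ is genuinely zero so that the factorization is valid, and that no stray individual $m$ or $n$ survives in $\Phi$. I would also verify the degenerate cases $s=0$ and $t=0$, where the conventions accompanying \eqref{Eq-K}--\eqref{Eq-H} collapse $A$ or $C$ to a $1\times 1$ block but leave the argument intact; this is exactly the step where the hypothesis $m+n=m'+n'$ is used.
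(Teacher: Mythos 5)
Your proposal is correct and is essentially identical to the paper's own proof: both split $\Hfunc{m}{n}{z}$ linearly in $h_z$ into a block-triangular part (which factors as a product of an $(s+1)\times(s+1)$ determinant depending only on $S,m$ and a $(t+1)\times(t+1)$ determinant depending only on $T,n$, hence free of $z$) plus $h_z$ times the cofactor obtained by deleting row $s+2$ and column $s+1$, whose entries involve $m,n$ only through $m+n-z$, so that the cofactor terms cancel in the difference when $m+n=m'+n'$.
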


\begin{proof} 
Since $\Hfunc{m}{n}{z}$ is the determinant of a matrix that is block triangular, 
save for the single subdiagonal element $h_z$, it may be evaluated by adding
to the determinant of the block triangular matrix the contribution arising from
$h_z$ and its cofactor. This follows from the fact that the expansion
of the determinant $\Hfunc{m}{n}{z}$ is linear in $h_z$, that is to say
of the form $X+h_z\,Y$, with the term $X$ calculated by setting $h_z=0$
and the term $Y$ equal to the cofactor of $h_z$. 
Since, in addition, the determinant of a block triangular
matrix is the product of the determinant of its diagonal blocks, we have
\begin{equation}\label{Eq-H-expansion}
\begin{array}{l}
\Hfunc{m}{n}{z}=\\
\\
\left|
\begin{array}{llll}
h_{S_{1,1}}&\cdots&h_{S_{1,s}}&h_{S_{1,s}+m}\\
\vdots&\ddots&\vdots&\vdots\\
h_{S_{s,1}}&\cdots&h_{S_{s,s}}&h_{S_{s,s}+m}\\
0&\cdots&1&h_{m}\\
\end{array}
\right|
\ \cdot\ 
\left|
\begin{array}{llll}
h_{n}&h_{n+T_{1,1}}&\cdots&h_{n+T_{1,t}}\\
1&h_{T_{1,1}}&\cdots&h_{T_{1,t}}\\
\vdots&\vdots&\ddots&\vdots\\
0&h_{T_{t,1}}&\cdots&h_{T_{t,t}}\\
\end{array}
\right|
\\
\\
\ - \ h_{z}\ \cdot\ 
\left|
\begin{array}{lllllll}
h_{S_{1,1}}&\cdots&h_{S_{1,s}}
&h_{S_{1,s}+m+n-z}&h_{S_{1,s}+m+n-z+T_{1,1}}&\cdots&h_{S_{1,s}+m+n-z+T_{1,t}}\\
\vdots&\ddots&\vdots&\vdots&\vdots&\ddots&\vdots\\
h_{S_{s,1}}&\cdots&h_{S_{s,s}}
&h_{S_{s,s}+m+n-z}&h_{S_{s,s}+m+n-z+T_{1,1}}&\cdots&h_{S_{s,s}+m+n-z+T_{1,t}}\\
0&\cdots&1
&h_{m+n-z}&h_{m+n-z+T_{1,1}}&\cdots&h_{m+n-z+T_{1,t}}\\
0&\cdots&0
&1&h_{T_{1,1}}&\cdots&h_{T_{1,t}}\\
\vdots&\ddots&\vdots
&\vdots&\vdots&\ddots&\vdots\\
0&\cdots&0
&0&h_{T_{t,1}}&\cdots&h_{T_{t,t}}\\
\end{array}
\right|\,.
\end{array}
\end{equation}

It can be seen from this that for all $m,n,m',n'\in\N$ with
$m+n=m'+n'$ and for all integers $z$ we have the identity
\begin{equation}
\label{Eq-H-identity}
\begin{array}{l}
\Hfunc{m}{n}{z}-\Hfunc{m'}{n'}{z}\\
\\
=
\left|
\begin{array}{llll}
h_{S_{1,1}}&\cdots&h_{S_{1,s}}&h_{S_{1,s}+m}\\
\vdots&\ddots&\vdots&\vdots\\
h_{S_{s,1}}&\cdots&h_{S_{s,s}}&h_{S_{s,s}+m}\\
0&\cdots&1&h_{m}\\
\end{array}
\right|
\ \cdot\ 
\left|
\begin{array}{llll}
h_{n}&h_{n+T_{1,1}}&\cdots&h_{n+T_{1,t}}\\
1&h_{T_{1,1}}&\cdots&h_{T_{1,t}}\\
\vdots&\vdots&\ddots&\vdots\\
0&h_{T_{t,1}}&\cdots&h_{T_{t,t}}\\
\end{array}
\right|
\\
\\
-
\left|
\begin{array}{llll}
h_{S_{1,1}}&\cdots&h_{S_{1,s}}&h_{S_{1,s}+m'}\\
\vdots&\ddots&\vdots&\vdots\\
h_{S_{s,1}}&\cdots&h_{S_{s,s}}&h_{S_{s,s}+m'}\\
0&\cdots&1&h_{m'}\\
\end{array}
\right|
\ \cdot\ 
\left|
\begin{array}{llll}
h_{n'}&h_{n'+T_{1,1}}&\cdots&h_{n'+T_{1,t}}\\
1&h_{T_{1,1}}&\cdots&h_{T_{1,t}}\\
\vdots&\vdots&\ddots&\vdots\\
0&h_{T_{t,1}}&\cdots&h_{T_{t,t}}\\
\end{array}
\right|\,
\end{array}
\end{equation}
where it is to be noted that the resulting expression is independent of $z$,
as claimed.
\end{proof}

To make the connection with Lemmas \ref{BasLemSec} and \ref{GenLem}
by means of Jacobi-Trudi determinants, we introduce the following
hypothesis.

\begin{hypothesis}
\label{Hyp-ST}
Let $\sigma,\tau,\ov\sigma,\ov\tau$ satisfy Hypothesis~\ref{ovsigma},
and let the matrices $S$ and $T$ have matrix elements 
defined in terms of these by
\begin{equation}\label{Eq-S-sigma}
   S_{ij}=\left\{
\begin{array}{ll}
 (\sigma_i+\sigma_{i+1}+\cdots+\sigma_{j})-(\ov\sigma_i+\ov\sigma_{i+1}+\cdots+\ov\sigma_{j-1})-i+j &\text{if $i\leq j-1$;}\\
 \  \sigma_i &\text{if $i=j$;}\\
 \  \ov\sigma_j-1  &\text{if $i=j+1$;}\\
 -(\sigma_{j+1}+\sigma_{j+2}+\cdots+\sigma_{i-1})+(\ov\sigma_j+\ov\sigma_{j+1}+\cdots+\ov\sigma_{i-1})-i+j&\text{if $i> j+1$,}\\
 \end{array} \right. \,
\end{equation}
for $1\leq i,j\leq s$, and
\begin{equation}\label{Eq-T-tau}
   T_{ij}=\left\{
\begin{array}{ll}
 (\tau_i+\tau_{i+1}+\cdots+\tau_{j})-(\ov\tau_i+\ov\tau_{i+1}+\cdots+\ov\tau_{j-1})-i+j &\text{if $i\leq j-1$;}\\
 \  \tau_i &\text{if $i=j$;}\\
 \  \ov\tau_j-1  &\text{if $i=j+1$;}\\
 -(\tau_{j+1}+\tau_{j+2}+\cdots+\tau_{i-1})+(\ov\tau_j+\ov\tau_{j+1}+\cdots+\ov\tau_{i-1})-i+j&\text{if $i> j+1$,}\\
 \end{array} \right. \,
\end{equation}
for $1\leq i,j\leq t$.
\end{hypothesis}

We then have
\begin{lemma}\label{HKlemma} 
Assume that $S$ and $T$ satisfy Hypothesis~\ref{Hyp-ST}.
If $0\le x\le\min\{m,n\}+1$ then
\begin{equation}
\label{Eq-HK}
\Hfunc{m}{n}{x-1}=\Kfunc{m}{n}{x}.
\end{equation}
\end{lemma}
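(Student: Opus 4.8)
The plan is to recognise each determinant $\Hfunc{m}{n}{x-1}$ as a Jacobi--Trudi determinant in the sense of \eqref{skew-schur-def-h} and \eqref{GeneralJ-T}, and to identify it with the skew Schur function appearing in the relevant case of \eqref{Eq-K}. The matrices $S$ and $T$ of Hypothesis~\ref{Hyp-ST} are manufactured from \eqref{Eq-S-sigma} and \eqref{Eq-T-tau} precisely so as to reproduce the $\sigma$- and $\tau$-subblocks of the matrix \eqref{GeneralMatrix}. Thus the crux is to verify that, upon setting $z=x-1$, the full matrix in \eqref{Eq-H} coincides, entry by entry, with \eqref{GeneralJ-T} for the appropriate shape.

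First I would treat the case $x\le m$, $x\le n$, where $\Kfunc{m}{n}{x}=\{\sigma,m,n,\tau\,|\,\bar\sigma,x,\bar\tau\}$. Writing this shape in the form of \eqref{GeneralJ-T} with first row $(\sigma,m,n,\tau)$ and successive two-row overlaps $\bar\sigma,x,\bar\tau$ (so that the $\beta_i+1$ of \eqref{GeneralMatrix} are these overlaps), I would compare \eqref{Eq-H} at $z=x-1$ with \eqref{GeneralMatrix}. The subdiagonal entry $h_z$ sits in position $(s+2,s+1)$ and becomes $h_{x-1}$, which is exactly the Jacobi--Trudi entry associated to the connecting overlap $x$. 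The only delicate entries are the cross-block ones, for instance the $(i,s+2)$ entry $h_{S_{i,s}+m+n-z}$ with $i\le s$; a short computation from \eqref{Eq-S-sigma} shows this equals the required entry of \eqref{GeneralMatrix} precisely because $\bar\sigma_s=1$, and symmetrically $\bar\tau_1=1$ handles the entries reaching into the $T$-block. This establishes \eqref{Eq-HK} in the first case.

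For the remaining three cases the value $z=x-1$ forces a coincidence in \eqref{Eq-H}. When $x=m+1\le n$ we have $z=m$ and $m+n-z=n$, so rows $s+1$ and $s+2$ agree in every column from $s+1$ onwards; subtracting row $s+2$ from row $s+1$ leaves a row whose only nonzero entry is the $1$ in column $s$. A Laplace expansion along it deletes row $s+1$ and column $s$, contributes the sign $(-1)^{(s+1)+s}=-1$, and leaves the Jacobi--Trudi determinant of $\{\sigma\backslash\sigma_s,m+\sigma_s,n,\tau\,|\,\bar\sigma\backslash\bar\sigma_s,x,\bar\tau\}$, the merging of the $\sigma_s$-row with the $m$-row being exactly what deleting column $s$ and row $s+1$ accomplishes, again thanks to $\bar\sigma_s=1$. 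The case $x\le m$, $x=n+1$ is dual: here $z=n$ and $m+n-z=m$, columns $s+1$ and $s+2$ agree except in row $s+3$, and a column subtraction followed by expansion yields the sign $-1$ and the determinant of $\{\sigma,m,n+\tau_1,\tau\backslash\tau_1\,|\,\bar\sigma,x,\bar\tau\backslash\bar\tau_1\}$. The case $x=m+1=n+1$ (so $m=n$) combines both coincidences, so two such reductions occur; their signs multiply to $+1$, matching the final line of \eqref{Eq-K}. Throughout, when $s=0$ (resp.\ $t=0$) the $S$-block (resp.\ $T$-block) is absent and the vanishing conventions stated after \eqref{Eq-K} and \eqref{Eq-H} apply.

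I expect the main obstacle to be the second-order bookkeeping in the last three cases: after the row or column subtraction and the Laplace expansion, one must confirm that the surviving minor really is the Jacobi--Trudi matrix of the merged shape, which amounts to re-deriving the entry-matching of the first case for indices that now straddle the deleted row and column. The sign tracking is routine, but verifying that the reduced determinant reproduces \eqref{GeneralMatrix} for the contracted composition, with $m+\sigma_s$ or $n+\tau_1$ replacing two former rows, is the step demanding the most care.
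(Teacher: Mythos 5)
Your proposal is correct and follows essentially the same route as the paper's own proof: the generic case $x\le\min\{m,n\}$ is handled by direct entry-by-entry comparison of \eqref{Eq-H} at $z=x-1$ with the Jacobi--Trudi matrix \eqref{GeneralMatrix} (using $\bar\sigma_s=\bar\tau_1=1$ exactly as you indicate), and the three boundary cases are handled by observing that rows $s+1,s+2$ (resp.\ columns $s+1,s+2$) coincide apart from a single entry $1$, then expanding on the cofactor of that $1$, with the two sign factors $-1$ multiplying to $+1$ in the case $x-1=m=n$. The paper additionally records explicitly that the determinant vanishes when $s=0$ (resp.\ $t=0$) because the coinciding rows (resp.\ columns) then agree in every entry, matching the vanishing conventions in \eqref{Eq-K}, a point your final sentence covers only implicitly.
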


\begin{proof}
Let $z=x-1$. 
In the case $z<\min\{m,n\}$, under the given hypothesis regarding $S$ and $T$, 
comparing the determinant \eqref{Eq-H} 
with the general Jacobi-Trudi determinant \eqref{GeneralJ-T} for a skew Schur function, 
expressed in overlap notation by way of \eqref{GeneralMatrix}, 
and noting that $\sigma_s=\tau_1=1$, 
immediately gives
\begin{equation}
\Hfunc{m}{n}{x-1}=
\{\sigma,m,n,\tau\,|\,\bar\sigma,x,\bar\tau\}.
\end{equation}
Thanks to the definition \eqref{Eq-K}, this proves \eqref{Eq-HK} in the case
$x\leq\min\{m,n\}$.

In the $z=m<n$ case, if $s=0$ then $\Hfunc{m}{n}{m}=0$ since 
the first and second rows of determinant \eqref{Eq-H}  coincide. 
On the other hand, for $s>0$ the entries of $\Hfunc{m}{n}{m}$ in the 
$(s+1)$th and $(s+2)$th rows all coincide apart from a single entry $1$ 
in the $(s+1)$th row.
Consideration of the cofactor of this element then yields
\begin{equation}
\label{Eq-STm}
\Hfunc{m}{n}{m}=
-\{\sigma\backslash\sigma_s,m+\sigma_s,n,\tau\,|\,\ov\sigma\backslash\ov\sigma_s,m+1,\ov\tau\}
\end{equation}
for $s>0$.
Once again, the definition \eqref{Eq-K} is such that this proves \eqref{Eq-HK} in the case
$x-1=m<n$.
 
Similarly in the $z=n<m$ case, if $t=0$ then $\Hfunc{m}{n}{n}=0$
since the $(s+1)$th and $(s+2)$th columns coincide. 
For $t>0$ the entries of $\Hfunc{m}{n}{n}$ in the $(s+1)$th and $(s+2)$th 
columns coincide apart from a single entry $1$ in the $(s+2)$th column. This time,
consideration of the cofactor of this element yields
\begin{equation}
\label{Eq-STn}
\Hfunc{m}{n}{n}
=
-\{\sigma,m,n+\tau_1,\tau\backslash\tau_1\,|\,\ov\sigma,n+1,\ov\tau\backslash \ov\tau_1\}
\end{equation}
for $t>0$.
Via the definition \eqref{Eq-K}, this proves \eqref{Eq-HK} in the case
$x-1=n<m$.

Finally, the case $z=m=n$ is such that $\Hfunc{m}{m}{m}=0$ if either
$s=0$ or $t=0$ since the determinant involves two identical
rows or columns, respectively. 
For $s,t>0$ the determinant involves two rows identical save for a single entry $1$
and two columns identical save for another single entry $1$. Subtracting rows,
taking the cofactor of the $1$, and then doing the same for the columns
gives
\begin{equation}
\Hfunc{m}{m}{m}=\{\sigma\backslash\sigma_s,m+\sigma_s,m+\tau_1,\tau\backslash\tau_1
\,|\, \ov\sigma\backslash\ov\sigma_s,m+1,\ov\tau\backslash\ov\tau_1\} \,
\end{equation}
for $s>0$ and $t>0$.
Again via \eqref{Eq-K}, this proves \eqref{Eq-HK} in the final $x-1=n=m$ case.
\end{proof}

This identification of $\Hfunc{m}{n}{x-1}$, for $S$ and $T$ satisfying 
Hypothesis~\ref{Hyp-ST}, offers us an alternative proof of Lemma~\ref{BasLemSec}.

\begin{proof}[Alternative proof of Lemma~\ref{BasLemSec}]
For all $s\times s$ and $t\times t$ matrices $S$ and $T$, 
it follows from Lemma~\ref{Lem-Hdiff} that for all $m\geq1$,
$n\geq2$ and all integers $x$ we have
\begin{equation}
\Hfunc{m}{n}{x-1}-\Hfunc{m+1}{n-1}{x-1}=\Hfunc{m}{n}{x}-\Hfunc{m+1}{n-1}{x}\,.
\nonumber
\end{equation}
If we then assume that $S$ and $T$ satisfy Hypothesis~\ref{Hyp-ST},
and restrict $x$ so that $0\leq x\leq \min\{m,n-1\}$, we can
apply Lemma~\ref{HKlemma} to each term. This gives
\begin{equation}
\Kfunc{m}{n}{x}-\Kfunc{m+1}{n-1}{x}=\Kfunc{m}{n}{x+1}-\Kfunc{m+1}{n-1}{x+1}\,.
\end{equation}
However, thanks to the definition \eqref{Eq-K}, this is just \eqref{BasicEq}.
\end{proof}

In fact, it is no harder to prove Lemma~\ref{GenLem}.

\begin{proof}[Alternative proof of Lemma~\ref{GenLem}]
Once again for all $s\times s$ and $t\times t$ matrices $S$ and $T$, 
it follows from Lemma~\ref{Lem-Hdiff} that for all positive integers
$m,n,m',n'$ such that $m+n=m'+n'$, and all integers $x,x'$ we have
\begin{equation}
\Hfunc{m}{n}{x-1}-\Hfunc{m'}{n'}{x-1}=\Hfunc{m}{n}{x'-1}-\Hfunc{m'}{n'}{x'-1}\,.
\nonumber
\end{equation}
Then assuming that $S$ and $T$ satisfy Hypothesis~\ref{Hyp-ST},
and restricting $x$ and $x'$ so that $0\leq x,x'\leq\min\{m,n,m',n'\}+1$, we can
simply apply Lemma~\ref{HKlemma} to each term, giving
\begin{equation}
\Kfunc{m}{n}{x}-\Kfunc{m'}{n'}{x}=\Kfunc{m}{n}{x'}-\Kfunc{m'}{n'}{x'}\,
\end{equation}
as required.
\end{proof}

\subsection{Simple skew Schur function differences}  

Now we derive four straightforward corollaries of Lemma \ref{GenLem}.  

\begin{corollary}\label{Cor1}
Assume $\sigma ,\tau ,\ov\sigma ,\ov\tau$ satisfy Hypothesis~\ref{ovsigma}. If $0\le x\le m$ then
\begin{equation}\label{Cor1Eq}
\begin{split}
&\{\sigma,m,m+1,\tau\,|\,\sbar,x,\tbar\}
-\{\sigma,m+1,m,\tau\,|\,\sbar,x,\tbar\}\\
&\qquad\qquad\quad=\quad
{}-\{\sigma\backslash\sigma_s,m+\sigma_s,m+1,\tau\,|\,\sslash,m+1,\tbar\}\\
&\qquad\qquad\qquad\qquad
+\{\sigma,m+1,m+\tau_1,\tau\backslash\tau_1\,|\,\sbar,m+1,\tslash\}.
\end{split}
\end{equation}
If  $s=0$ then omit the first term.
Similarly, if  $t=0$ then omit the second term.
\end{corollary}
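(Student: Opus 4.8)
The plan is to obtain \eqref{Cor1Eq} as a direct specialization of Lemma~\ref{GenLem}. Concretely, I would invoke the identity \eqref{BasCorEq} with the parameter choices $(m,n)=(m,m+1)$ and $(m',n')=(m+1,m)$, so that $m+n=m'+n'=2m+1$ as required, and with the two values being the given $x$ (satisfying $0\le x\le m$) and $x'=m+1$. Since $\min\{m,m+1,m+1,m\}+1=m+1$, both $x\le m+1$ and $x'=m+1$ lie in the admissible range demanded by Lemma~\ref{GenLem}; here one tacitly uses $m\ge1$, which is forced because $(\sigma,m,m+1,\tau)$ and $(\sigma,m+1,m,\tau)$ must be genuine compositions and hence $m,n,m',n'$ must be positive integers.

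Next I would read off each of the four $K$-values from the defining piecewise formula \eqref{Eq-K}. For the value $x$, since $x\le m\le m+1$ we land in the first branch of \eqref{Eq-K}, giving $\Kfunc{m}{m+1}{x}=\{\sigma,m,m+1,\tau\,|\,\sbar,x,\tbar\}$ and $\Kfunc{m+1}{m}{x}=\{\sigma,m+1,m,\tau\,|\,\sbar,x,\tbar\}$, whose difference is exactly the left-hand side of \eqref{Cor1Eq}. For the value $x'=m+1$, the evaluation $\Kfunc{m}{m+1}{m+1}$ falls into the second branch of \eqref{Eq-K} (the first argument is $m$ and $x'=m+1$ equals it plus one, while $x'=m+1$ does not exceed the second argument $m+1$), producing $-\{\sigma\backslash\sigma_s,m+\sigma_s,m+1,\tau\,|\,\sslash,m+1,\tbar\}$; whereas $\Kfunc{m+1}{m}{m+1}$ falls into the third branch (now $x'=m+1$ does not exceed the first argument $m+1$, while it equals the second argument $m$ plus one), producing $-\{\sigma,m+1,m+\tau_1,\tau\backslash\tau_1\,|\,\sbar,m+1,\tslash\}$.

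Subtracting these, the right-hand side of \eqref{BasCorEq} at $x'=m+1$ becomes $-\{\sigma\backslash\sigma_s,m+\sigma_s,m+1,\tau\,|\,\sslash,m+1,\tbar\}+\{\sigma,m+1,m+\tau_1,\tau\backslash\tau_1\,|\,\sbar,m+1,\tslash\}$, which is precisely the right-hand side of \eqref{Cor1Eq}. Equating the two sides of \eqref{BasCorEq} then yields the corollary at once.

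The computation presents no genuine obstacle; the only points needing care are the bookkeeping of which branch of the piecewise definition \eqref{Eq-K} is selected in each of the four evaluations, and the verification that $x'=m+1$ sits exactly on the boundary $\min\{m,n,m',n'\}+1$ that Lemma~\ref{GenLem} permits. The degenerate cases require no separate treatment: the conventions attached to \eqref{Eq-K} — setting to zero the term containing $\sigma\backslash\sigma_s$ when $s=0$, and the term containing $\tau\backslash\tau_1$ when $t=0$ — transfer directly into the instructions to omit the first or second term of \eqref{Cor1Eq}, respectively.
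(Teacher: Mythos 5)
Your proposal is correct and is essentially identical to the paper's own proof: both apply Lemma~\ref{GenLem} with $(m,n)=(m,m+1)$, $(m',n')=(m+1,m)$, and $x'=m+1$, then identify the left-hand terms as the first case of \eqref{Eq-K} and the right-hand terms as the second and third cases, respectively. Your extra bookkeeping (verifying $x'=m+1$ sits on the admissible boundary and that the $s=0$, $t=0$ conventions carry over) is sound and just makes explicit what the paper leaves implicit.
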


\begin{proof}
This is the special case
$$\Kfunc{m}{m+1}{x}-\Kfunc{m+1}{m}{x}=\Kfunc{m}{m+1}{m+1}-\Kfunc{m+1}{m}{m+1}$$
of Lemma~\ref{GenLem},
rewritten using \eqref{Eq-K}.
Note that the terms on the left are each the first case of \eqref{Eq-K},
and the terms on the right are the second and
third cases, respectively.
\end{proof}

The next corollary was proved independently by McNamara \cite{McN}.

\begin{corollary}\label{Cor2} 
Assume $\sigma ,\tau ,\ov\sigma ,\ov\tau$ satisfy Hypothesis~\ref{ovsigma}. If $0\le x<n\le m$ then
\begin{equation}\label{Cor2Eq}
\begin{split}
&\{\sigma,m,n,\tau\,|\,\sbar,x,\tbar\}
-\{\sigma,m+1,n-1,\tau\,|\,\sbar,x,\tbar\}\\
&\qquad\qquad\quad=\quad
\{\sigma,m,n,\tau\,|\,\sbar,n,\tbar\}\\
&\qquad\qquad\qquad\qquad
+\{\sigma,m+1,n-1+\tau_1,\tau\backslash\tau_1\,|\,\sbar,n,\tslash\}.
\end{split}
\end{equation}
If  $t=0$ then omit the second term.
\end{corollary}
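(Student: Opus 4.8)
The plan is to realise Corollary~\ref{Cor2} as a single specialisation of the general identity in Lemma~\ref{GenLem}, exactly as was done for Corollary~\ref{Cor1}. Recall that Lemma~\ref{GenLem} asserts, for $m+n=m'+n'$ and $x,x'\le\min\{m,n,m',n'\}+1$, the equality $\Kfunc{m}{n}{x}-\Kfunc{m'}{n'}{x}=\Kfunc{m}{n}{x'}-\Kfunc{m'}{n'}{x'}$. First I would apply this with $m'=m+1$, $n'=n-1$ (so that $m+n=m'+n'$ is automatic), keeping the left-hand value $x$ as given and choosing the right-hand value to be $x'=n$. The hypothesis $0\le x<n\le m$ guarantees both that $x\le\min\{m,n,m+1,n-1\}+1=n$ and that $x'=n\le\min\{m,n,m+1,n-1\}+1=n$, so the constraints of Lemma~\ref{GenLem} are met and the substitution is legitimate.

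With these choices, the identity becomes
\begin{equation}
\Kfunc{m}{n}{x}-\Kfunc{m+1}{n-1}{x}=\Kfunc{m}{n}{n}-\Kfunc{m+1}{n-1}{n}.
\nonumber
\end{equation}
The remaining work is purely to translate each of the four $K$-terms back into overlap notation via the definition~\eqref{Eq-K}. I would handle them case by case according to the bracketing in~\eqref{Eq-K}. Since $x<n\le m$, we have $x\le m$ and $x\le n$, so $\Kfunc{m}{n}{x}=\{\sigma,m,n,\tau\,|\,\bar\sigma,x,\bar\tau\}$ (first case); likewise $x\le m+1$ and $x\le n-1$ fails only if $x=n-1<n$, but since $x<n$ we still have $x\le n-1$, so $\Kfunc{m+1}{n-1}{x}=\{\sigma,m+1,n-1,\tau\,|\,\bar\sigma,x,\bar\tau\}$ is also the first case. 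These furnish the left-hand side of~\eqref{Cor2Eq}.

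The right-hand terms are where the cases of~\eqref{Eq-K} bite. For $\Kfunc{m}{n}{n}$ the value $x=n$ satisfies $x\le m$ (as $n\le m$) and $x=n$, i.e. $x\le m$ and $x=n\le n$, so this lands in the \emph{first} case and equals $\{\sigma,m,n,\tau\,|\,\bar\sigma,n,\bar\tau\}$. For $\Kfunc{m+1}{n-1}{n}$ the value $x=n$ satisfies $x\le m+1$ and $x=(n-1)+1$, i.e. $x=n'+1$ with $n'=n-1$, so this is the \emph{third} case of~\eqref{Eq-K}, giving $-\{\sigma,m+1,(n-1)+\tau_1,\tau\backslash\tau_1\,|\,\bar\sigma,n,\bar\tau\backslash\bar\tau_1\}$. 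Substituting all four evaluations and moving the sign from the third-case term produces precisely~\eqref{Cor2Eq}.

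The main point requiring care is the boundary bookkeeping on $x$ and $x'$ so that each $K$-term is assigned to the correct branch of~\eqref{Eq-K}; this is the only place an error could creep in, and the strict inequality $x<n$ together with $n\le m$ is exactly what forces $\Kfunc{m}{n}{n}$ into the first (unmodified) case while forcing $\Kfunc{m+1}{n-1}{n}$ into the third. Finally, the $t=0$ proviso is inherited directly: when $t=0$ the third case of~\eqref{Eq-K} is set to $0$, so the term containing $\tau\backslash\tau_1$ is simply omitted, matching the statement. No term involving $\sigma\backslash\sigma_s$ appears here because none of the four $K$-terms falls into the second case of~\eqref{Eq-K}, which is why, unlike Corollary~\ref{Cor1}, there is no $s=0$ proviso.
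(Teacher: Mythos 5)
Your proposal is correct and is essentially identical to the paper's own proof: both realise the corollary as the specialisation $\Kfunc{m}{n}{x}-\Kfunc{m+1}{n-1}{x}=\Kfunc{m}{n}{n}-\Kfunc{m+1}{n-1}{n}$ of Lemma~\ref{GenLem} and then rewrite the four terms via \eqref{Eq-K}, with the two left-hand terms falling in the first case and the two right-hand terms in the first and third cases respectively. Your boundary bookkeeping (using $x<n\le m$ to place each term in its branch) and your handling of the $t=0$ proviso match the paper's treatment exactly.
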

\begin{proof}
This is the special case
$$\Kfunc{m}{n}{x}-\Kfunc{m+1}{n-1}{x}=\Kfunc{m}{n}{n}-\Kfunc{m+1}{n-1}{n}$$
of Lemma~\ref{GenLem},
rewritten using \eqref{Eq-K}.
Note that the terms on the left are each the first case of \eqref{Eq-K},
and the terms on the right are the first and third cases, respectively.
\end{proof}

\begin{corollary}\label{Cor4}
Assume $\sigma ,\tau ,\ov\sigma ,\ov\tau$ satisfy Hypothesis~\ref{ovsigma}. If $0<m\le n$ then
\begin{equation}\label{Cor4Eq}
\begin{split}
&\{\sigma,m,n,\tau\,|\,\sbar,m-1,\tbar\}
-\{\sigma,n+1,m-1,\tau\,|\,\sbar,m-1,\tbar\}\\
&\qquad\qquad\quad=\quad
\{\sigma,m,n,\tau\,|\,\sbar,m,\tbar\}\\
&\qquad\qquad\qquad\qquad
+\{\sigma,n+1,m-1+\tau_1,\tau\backslash\tau_1\,|\,\sbar,m,\tslash\}.
\end{split}
\end{equation}
If  $t=0$ then omit the second term.
\end{corollary}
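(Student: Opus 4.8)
The plan is to obtain Corollary~\ref{Cor4} as a one-line specialization of Lemma~\ref{GenLem}, exactly as was done for Corollaries~\ref{Cor1} and~\ref{Cor2}. Concretely, I would invoke \eqref{BasCorEq} with length parameters $(m,n)$ and $(m',n')=(n+1,m-1)$, which satisfy $m+n=m'+n'$, together with the values $x=m-1$ and $x'=m$. This produces
\begin{equation*}
\Kfunc{m}{n}{m-1}-\Kfunc{n+1}{m-1}{m-1}=\Kfunc{m}{n}{m}-\Kfunc{n+1}{m-1}{m},
\end{equation*}
and the entire task is then to translate each of the four $K$-terms by means of the definition~\eqref{Eq-K}.

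First I would verify the hypotheses of Lemma~\ref{GenLem}. As $0<m\le n$, we have $\min\{m,n,n+1,m-1\}=m-1$, so the constraint $x,x'\le\min\{m,n,m',n'\}+1=m$ holds for both $x=m-1$ and $x'=m$. Then I would identify the terms. Since $m-1\le m\le n$ and $m-1\le m-1\le n+1$, the two terms on the left both fall in the first case of~\eqref{Eq-K}, yielding $\{\sigma,m,n,\tau\,|\,\sbar,m-1,\tbar\}$ and $\{\sigma,n+1,m-1,\tau\,|\,\sbar,m-1,\tbar\}$. On the right, $\Kfunc{m}{n}{m}$ is again the first case (as $m\le m$ and $m\le n$), giving $\{\sigma,m,n,\tau\,|\,\sbar,m,\tbar\}$. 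The decisive term is $\Kfunc{n+1}{m-1}{m}$: here $m\le n+1$ while $m=(m-1)+1$, so this lies in the \emph{third} case of~\eqref{Eq-K} and equals $-\{\sigma,n+1,m-1+\tau_1,\tau\backslash\tau_1\,|\,\sbar,m,\tslash\}$. Moving this term to the other side flips its sign and reproduces the second summand of~\eqref{Cor4Eq}, while the rule that the third case of~\eqref{Eq-K} is $0$ when $t=0$ matches the prescription to omit that summand.

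The only point requiring genuine care is the boundary value $m=1$, and I expect this to be the main obstacle. There $n'=m-1=0$ fails to be a positive integer, so Lemma~\ref{GenLem} does not apply verbatim; equivalently, the telescoping~\eqref{BasCorPf1} over $\Kfunc{i}{m+n-i}{\cdot}$ reaches $q=m=1$ at its final step, whereas the underlying relation~\eqref{BasLemEq} requires $q\ge2$. I would treat this case separately, interpreting the degenerate shape carrying the zero part through the empty-row equivalence and checking that the identity persists; for every $m\ge2$ the specialization above is immediate and finishes the proof.
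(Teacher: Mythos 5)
Your proposal is correct and is essentially the paper's own proof: the paper obtains Corollary~\ref{Cor4} as precisely the specialization $\Kfunc{m}{n}{m-1}-\Kfunc{n+1}{m-1}{m-1}=\Kfunc{m}{n}{m}-\Kfunc{n+1}{m-1}{m}$ of Lemma~\ref{GenLem}, noting as you do that the two left-hand terms fall under the first case of \eqref{Eq-K} while the right-hand terms fall under the first and third cases, so that carrying the third-case term across the equality yields the positive second summand of \eqref{Cor4Eq}, with the $t=0$ convention matching the omission rule. The boundary case $m=1$ that you flag (where $n'=m-1=0$ violates the positivity hypotheses of Lemma~\ref{GenLem} and of the definition \eqref{Eq-K}) is a genuine subtlety, but the paper's one-line proof passes over it silently, and in every application the paper makes of the corollary (in Lemma~\ref{Lem2} and in the proof of Theorem~\ref{big-difference}) one has $m=ka+1\geq 3$, so on this point you are being more careful than the paper, not less.
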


\begin{proof}
This is the special case
$$\Kfunc{m}{n}{m-1}-\Kfunc{n+1}{m-1}{m-1}=\Kfunc{m}{n}{m}-\Kfunc{n+1}{m-1}{m}$$
of Lemma~\ref{GenLem},
rewritten using \eqref{Eq-K}.
Note that the terms on the left are each the first case of \eqref{Eq-K},
and the terms on the right are the first and third cases, respectively.
\end{proof}

\begin{corollary}\label{Cor3}
Assume $\sigma ,\tau ,\ov\sigma ,\ov\tau$ satisfy Hypothesis~\ref{ovsigma}. If $0\le m<n$ then
\begin{equation}\label{Cor3Eq}
\begin{split}
&\{\sigma,m,n,\tau\,|\,\sbar,m,\tbar\}
-\{\sigma,n,m,\tau\,|\,\sbar,m,\tbar\}\\
&\qquad\qquad\quad=\quad
{}-\{\sigma\backslash\sigma_s,m+\sigma_s,n,\tau\,|\,\sslash,m+1,\tbar\}\\
&\qquad\qquad\qquad\qquad
+\{\sigma,n,m+\tau_1,\tau\backslash\tau_1\,|\,\sbar,m+1,\tslash\}.
\end{split}
\end{equation}
If  $s=0$ then omit the first term.
Similarly, if  $t=0$ then omit the second term.
\end{corollary}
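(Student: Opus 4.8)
The plan is to derive Corollary~\ref{Cor3} as a direct specialization of Lemma~\ref{GenLem}, exactly as the three preceding corollaries were obtained. First I would choose the parameters in \eqref{BasCorEq} so that the four $K$-functions reproduce the four terms of \eqref{Cor3Eq}. The left-hand side of \eqref{Cor3Eq} involves the shapes $\{\sigma,m,n,\tau\,|\,\sbar,m,\tbar\}$ and $\{\sigma,n,m,\tau\,|\,\sbar,m,\tbar\}$, so I would take $(m,n)$ and $(m',n')=(n,m)$ in Lemma~\ref{GenLem}, with the overlap value $x=m$. Since $0\le m<n$ here, we have $m\le\min\{m,n,n,m\}=m$, so the constraint $x\le\min\{m,n,m',n'\}+1$ is satisfied; moreover $x=m\le m=\min\{m,n\}$ and $x=m\le n=\min\{n,m\}$ wait---I must check that $m\le\min\{m,n\}$ and $m\le\min\{n,m\}$ both hold, which they do since $m<n$. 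Thus both left-hand $K$-functions fall under the first case of the definition \eqref{Eq-K} and equal the stated skew Schur functions.

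Next I would identify the right-hand side. Setting $x'=m+1$, Lemma~\ref{GenLem} gives
\begin{equation}
\Kfunc{m}{n}{m}-\Kfunc{n}{m}{m}=\Kfunc{m}{n}{m+1}-\Kfunc{n}{m}{m+1},
\nonumber
\end{equation}
and I would evaluate the two terms on the right using \eqref{Eq-K}. For $\Kfunc{m}{n}{m+1}$ we have $x'=m+1$ with $m+1\le n$ (as $m<n$), so this is the second case of \eqref{Eq-K}, yielding $-\{\sigma\backslash\sigma_s,m+\sigma_s,n,\tau\,|\,\sslash,m+1,\tbar\}$. For $\Kfunc{n}{m}{m+1}$ the roles of the middle parts are swapped: here the first middle part is $n$ and the second is $m$, so with $x'=m+1$ we are in the third case of \eqref{Eq-K} (namely $x'\le n$ with the second part $=m$ and $x'=m+1$), giving $-\{\sigma,n,m+\tau_1,\tau\backslash\tau_1\,|\,\sbar,m+1,\tslash\}$. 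Substituting these four evaluations into the displayed identity and moving the second right-hand term across reproduces \eqref{Cor3Eq} precisely, including the signs.

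The main obstacle---and the only step requiring genuine care rather than mechanical substitution---is verifying that the correct cases of the piecewise definition \eqref{Eq-K} are triggered for the asymmetrically-indexed $\Kfunc{n}{m}{m+1}$, since the second and third cases of \eqref{Eq-K} are distinguished by \emph{which} of the two middle parts equals $x-1$, and I must confirm that swapping $(m,n)\mapsto(n,m)$ genuinely exchanges the ``$\sigma$-side'' collapse for the ``$\tau$-side'' collapse. Concretely, I would check that in $\Kfunc{n}{m}{m+1}$ the inequality $m+1\le n$ places us outside the second case (which would require the first middle part to equal $x$) and inside the third case (second middle part $m$ equals $x-1=m$). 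I would then add the standard caveats: if $s=0$ the first right-hand term, which contains $\sigma\backslash\sigma_s$, is omitted, and if $t=0$ the second right-hand term, which contains $\tau\backslash\tau_1$, is omitted, in accordance with the conventions attached to \eqref{Eq-K}. With these case identifications confirmed, the corollary follows immediately.
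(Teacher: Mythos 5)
Your proposal is correct and follows essentially the same route as the paper's own proof: the paper likewise obtains Corollary~\ref{Cor3} as the specialization $\Kfunc{m}{n}{m}-\Kfunc{n}{m}{m}=\Kfunc{m}{n}{m+1}-\Kfunc{n}{m}{m+1}$ of Lemma~\ref{GenLem}, with the two left-hand terms falling under the first case of \eqref{Eq-K} and the two right-hand terms under the second and third cases, respectively. Your careful verification that $\Kfunc{n}{m}{m+1}$ triggers the third case (the $\tau$-side collapse) rather than the second is exactly the case identification the paper records.
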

\begin{proof}
This is the special case
$$\Kfunc{m}{n}{m}-\Kfunc{n}{m}{m}=\Kfunc{m}{n}{m+1}-\Kfunc{n}{m}{m+1}$$
of Lemma~\ref{GenLem},
rewritten using \eqref{Eq-K}.
Note that the terms on the left are each the first case of \eqref{Eq-K},
and the terms on the right are the second and third cases, respectively.
\end{proof}

\subsection{Fundamental Schur positive difference expression}
We are now close to proving our main theorem, Theorem~\ref{big-difference}. The following two lemmas, which are consequences of the above
four corollaries, facilitate the proof of the equation therein, \eqref{DifferenceEq}.

\begin{lemma}\label{Lem1}
Assume $\sigma ,\tau ,\ov\sigma ,\ov\tau$ satisfy Hypothesis~\ref{ovsigma}. If $M\ge1$, $a\ge2$ and $k\ge1$ then
\begin{equation}
\nonumber
\begin{split}
&  \{\sigma,ka,ka,a^{M},\tau\,|\,
           \sbar,(k-1)a+1,1^{M}, \tbar\}\\[2mm]
& - \{\sigma,ka+1,ka-1,a^{M},\tau\,|\,
           \sbar,(k-1)a+1,1^{M}, \tbar\}\\
& + \sum_{i=1}^{M}
  \left( \{\sigma,a+1,a^{i-1},ka-1,ka,a^{M-i},\tau\,|\,
           \sbar, 1^{i},(k-1)a+1,1^{M-i}, \tbar\} \right.\\
&\quad\qquad
  - \left. \{\sigma,a+1,a^{i-1},ka,ka-1,a^{M-i},\tau\,|\,
           \sbar, 1^{i},(k-1)a+1,1^{M-i}, \tbar\} \right)\\[2mm]
&= \{\sigma,ka,ka,a^{M},\tau\,|\,
           \sbar,ka,1^{M}, \tbar\}\\[2mm]
&\quad+ \{\sigma,ka+1,(k+1)a-1,a^{M-1},\tau\,|\,
           \sbar,ka,1^{M-1}, \tbar\}\\[2mm]
&\quad- \{\sigma,(k+1)a,ka,a^{M-1},\tau\,|\,
           \sbar,ka,1^{M-1}, \tbar\}\\[2mm]
&\quad+ \{\sigma,a+1,a^{M-1},ka,ka-1+\tau_1,\tau\backslash\tau_1\,|\,
           \sbar,1^{M},ka,\tslash\}\\
&\quad+
\sum_{i=1}^{M-1}
  \left( \{\sigma,a+1,a^{i-1},ka,(k+1)a-1,a^{M-1-i},\tau\,|\,
           \sbar, 1^{i},ka,1^{M-i-1}, \tbar\} \right.\\
&\qquad\qquad
  - \left. \{\sigma,a+1,a^{i-1},(k+1)a-1,ka,a^{M-1-i},\tau\,|\,
           \sbar, 1^{i},ka,1^{M-i-1}, \tbar\} \right)
\end{split}
\end{equation}
where the fourth term on the right is to be omitted
if  $t=0$.
\end{lemma}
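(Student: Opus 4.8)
The plan is to derive Lemma~\ref{Lem1} as a telescoping consequence of the four corollaries, since the statement is visibly an identity relating a single large alternating sum of skew Schur functions to another such sum, with all shapes built by inserting the blocks $ka,ka$ or their neighbours into fixed prefixes $\sigma,a+1,a^{i-1}$ and suffixes $a^{M-i},\tau$. First I would observe that every individual difference appearing on the left is precisely of the form handled by one of Corollaries~\ref{Cor1}--\ref{Cor3}, once we absorb the surrounding entries into the roles played by $\sigma,\bar\sigma$ and $\tau,\bar\tau$ in Hypothesis~\ref{ovsigma}. Concretely, the leading difference
$$\{\sigma,ka,ka,a^{M},\tau\,|\,\sbar,(k-1)a+1,1^{M},\tbar\}-\{\sigma,ka+1,ka-1,a^{M},\tau\,|\,\sbar,(k-1)a+1,1^{M},\tbar\}$$
matches Corollary~\ref{Cor2} with $m=n=ka$ and $x=(k-1)a+1$ (note $x<n\le m$), with the composition $(a^{M},\tau)$ and its overlap data $(1^{M},\tbar)$ playing the role of the trailing $\tau,\bar\tau$. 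Its right-hand side then produces the first right-hand term of the lemma together with a term carrying overlap value $ka$, and this newly created term is exactly the input to the next corollary in the chain.

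The key steps, in order, are as follows. Step one: apply Corollary~\ref{Cor2} to the leading difference to generate $\{\sigma,ka,ka,a^{M},\tau\,|\,\sbar,ka,1^{M},\tbar\}$ plus a term whose first two ``free'' blocks are $ka+1,ka-1+\text{(something)}$ after the $\tau\backslash\tau_1$ split. Step two: recognise that the sum $\sum_{i=1}^{M}$ on the left consists of differences in which the pair $(ka-1,ka)$ versus $(ka,ka-1)$ is being swapped at position $i$ inside a run of $a$'s. Each such difference I would resolve using Corollary~\ref{Cor3} (with $m=ka-1$, $n=ka$, so $m<n$, and overlap value $m=ka-1$ promoted to $m+1=ka$ on the right), or its companion Corollary~\ref{Cor1}/\ref{Cor4} depending on whether the swap sits at an interior position or at the boundary where an $a+1$ meets the block. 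Step three: verify that the ``$-\{\cdots\mid\sslash,\cdots\}$'' first terms produced by these corollaries telescope against the adjacent summands. The mechanism is that each corollary outputs two terms at raised overlap value $ka$; one of these matches (with opposite sign) a term created by the neighbouring index, so that after summing over $i$ the interior contributions cancel in pairs, leaving only the boundary survivors displayed on the right-hand side.

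The cancellation bookkeeping will be the main obstacle. Each application of Corollary~\ref{Cor1}, \ref{Cor2}, or~\ref{Cor3} emits a first term of the form $-\{\sigma\backslash\sigma_s,\ldots\mid\sslash,\ldots\}$ and a second term of the form $+\{\ldots,\tau\backslash\tau_1\mid\ldots,\tslash\}$, and I must track precisely how the $a+1$ prefix introduced at each stage converts the ``$\sigma\backslash\sigma_s$'' contraction of one summand into the plain-$\sigma$ form of the next. The delicate point is that the composition $(\sigma,a+1,a^{i-1},\ldots)$ has its last $\sigma$-type entry equal to $1$ coming from $a+1$ only after the $a^{i-1}$ run is traversed, so the identification of which corollary term plays the contracting role requires checking Hypothesis~\ref{ovsigma}(4) at each junction, namely that $\bar\sigma_i\le\min\{\sigma_i,\sigma_{i+1}\}$ holds for the freshly formed prefix. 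Step four: collect the non-cancelling boundary terms and confirm they are exactly the six (or five, when $t=0$) terms on the right, paying particular attention to the $i=M$ endpoint, which produces the $\tau\backslash\tau_1$ term, and the $i=1$ endpoint, which merges with the output of step one to give the $(k+1)a$ blocks. I expect the verification to reduce to matching overlap sequences entry by entry, which is routine once the telescoping pattern is pinned down, so the real work lies in correctly pairing the raised-overlap terms across consecutive indices.
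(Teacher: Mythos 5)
Your step one is sound and matches the paper: Corollary~\ref{Cor2} with $m=n=ka$, $x=(k-1)a+1$, and with $(a^{M},\tau)$, $(1^{M},\tbar)$ playing the roles of $\tau$, $\tbar$, produces the first two terms on the right of Lemma~\ref{Lem1}. But your treatment of the sum over $i$ contains two genuine problems. First, the corollary you primarily propose for the summands, Corollary~\ref{Cor3}, does not apply: it requires the overlap value to equal $m$, whereas every summand on the left carries overlap $(k-1)a+1$, which equals $m=ka-1$ only when $a=2$. Corollary~\ref{Cor4} is also inapplicable, since it swaps $(m,n)\mapsto(n+1,m-1)$ rather than $(m,m+1)\mapsto(m+1,m)$. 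The tool that works, uniformly at every position $i$ (not only ``at the boundary''), is Corollary~\ref{Cor1} with $m=ka-1$ and $x=(k-1)a+1$, which is legitimate precisely because $x\le m$ is equivalent to $a\ge2$; the distinction you worry about between interior and boundary positions is handled automatically by the $\sigma\backslash\sigma_s$ contraction inside Corollary~\ref{Cor1} itself (at $i\ge2$ the absorbed entry is $a$, at $i=1$ it is $a+1$).

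Second, and more seriously, the telescoping-cancellation mechanism of your steps three and four is wrong, and could not be right: if the interior contributions really cancelled in pairs, the right side would consist only of boundary terms, contradicting the statement of the lemma itself, whose right side still contains the sum $\sum_{i=1}^{M-1}$ of $2(M-1)$ skew Schur functions. What actually happens is a regrouping, not a cancellation. Applying Corollary~\ref{Cor1} to the $i$th summand (with $i<M$) gives the positive term $\{\sigma,a+1,a^{i-1},ka,(k+1)a-1,a^{M-i-1},\tau\,|\,\sbar,1^{i},ka,1^{M-i-1},\tbar\}$, while the $(i+1)$th summand gives the negative term $-\{\sigma,a+1,a^{i-1},(k+1)a-1,ka,a^{M-i-1},\tau\,|\,\sbar,1^{i},ka,1^{M-i-1},\tbar\}$. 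These shapes are different---the blocks $ka$ and $(k+1)a-1$ occur in opposite orders---so nothing cancels; instead, this pair is exactly the $i$th summand of the sum on the right of the lemma. The only genuine ``boundary survivors'' are the negative Corollary~\ref{Cor1} term at $i=1$ (the third term on the right, $-\{\sigma,(k+1)a,ka,a^{M-1},\tau\,|\,\sbar,ka,1^{M-1},\tbar\}$) and the positive term at $i=M$ (the fourth term, carrying $\tau\backslash\tau_1$). Repairing your argument amounts to replacing the claimed cancellation by this pairing of output terms across consecutive indices, which is precisely the paper's proof.
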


\begin{proof}
The first two terms on the right result from applying
Corollary \ref{Cor2} to the first two terms on the left,
using $m=n=ka$ and $x=(k-1)a+1$.
The other terms result from applying Corollary \ref{Cor1}
to each summand on the left side, in turn, using $m=ka-1$
and $x=(k-1)a+1$.
The positive term in the $i$th summand on the right
is the positive term of \eqref{Cor1Eq} applied to the $i$th
summand on the left, and the negative term in the $i$th
summand on the right is the negative term of \eqref{Cor1Eq}
applied to the $(i+1)$th summand on the left.
The third term on the right is the negative term of
\eqref{Cor1Eq} applied to the first summand on the left,
and the fourth term on the right is the positive term of
\eqref{Cor1Eq} applied to the $M$th summand on the left.
\end{proof}

\begin{lemma}\label{Lem2}
Assume $\sigma ,\tau ,\ov\sigma ,\ov\tau$ satisfy Hypothesis~\ref{ovsigma}. If $M\ge1$, $a\ge2$ and $k\ge1$ then
\begin{equation}
\nonumber
\begin{split}
&  \{\sigma,ka+1,(k+1)a-1,a^{M},\tau\,|\,
           \sbar,ka,1^{M},\tbar\}\\[2mm]
& - \{\sigma,(k+1)a,ka,a^{M},\tau\,|\,
           \sbar,ka,1^{M}, \tbar\}\\
& + \sum_{i=1}^{M}
  \left( \{\sigma,a+1,a^{i-1},ka,(k+1)a-1,a^{M-i},\tau\,|\,
           \sbar,1^{i},ka,1^{M-i},\tbar\} \right.\\
&\quad\qquad
  - \left. \{\sigma,a+1,a^{i-1},(k+1)a-1,ka,a^{M-i},\tau\,|\,
           \sbar, 1^{i},ka,1^{M-i},\tbar\} \right)\\[2mm]
&= \{\sigma,ka+1,(k+1)a-1,a^{M},\tau\,|\,
           \sbar,ka+1,1^{M}, \tbar\}\\[2mm]
&\quad+ \{\sigma,(k+1)a,(k+1)a,a^{M-1},\tau\,|\,
           \sbar,ka+1,1^{M-1},\tbar\}\\[2mm]
&\quad- \{\sigma,(k+1)a+1,(k+1)a-1,a^{M-1},\tau\,|\,
           \sbar,ka+1,1^{M-1}, \tbar\}\\[2mm]
&\quad+ \{\sigma,a+1,a^{M-1},(k+1)a-1,ka+\tau_1,\tau\backslash\tau_1\,|\,
           \sbar, 1^{M},ka+1,\tslash\}\\
&\quad+
\sum_{i=1}^{M-1}
  \left( \{\sigma,a+1,a^{i-1},(k+1)a-1,(k+1)a,a^{M-1-i},\tau\,|\,
           \sbar, 1^{i},ka+1,1^{M-i-1}, \tbar\} \right.\\
&\qquad\qquad
  - \left. \{\sigma,a+1,a^{i-1},(k+1)a,(k+1)a-1,a^{M-1-i},\tau\,|\,
           \sbar, 1^{i},ka+1,1^{M-i-1}, \tbar\} \right)
\end{split}
\end{equation}
where the fourth term on the right is to be omitted
if  $t=0$.
\end{lemma}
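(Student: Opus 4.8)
The plan is to follow the architecture of the proof of Lemma~\ref{Lem1}, since the statement has exactly the same shape: two leading terms and a telescoping sum of $M$ paired differences on the left are to be matched against four leading terms and a telescoping sum of $M-1$ paired differences on the right. The only substantive change is that the two middle parts occurring in each difference here are separated by $a-1$ rather than by $1$, so the corollaries that drive the computation are Corollary~\ref{Cor4} and Corollary~\ref{Cor3} in place of Corollary~\ref{Cor2} and Corollary~\ref{Cor1}.

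First I would dispatch the two leading terms on the left. Their middle parts are $(ka+1,(k+1)a-1)$ and $((k+1)a,ka)$, sharing the overlap $ka$. Setting $m=ka+1$ and $n=(k+1)a-1$, one has $m-1=ka$ equal to the overlap and $(n+1,m-1)=((k+1)a,ka)$ equal to the second middle part, so these two terms are precisely the left-hand side of \eqref{Cor4Eq} after $\tau$ is extended to $(a^M,\tau)$ and $\bar\tau$ to $(1^M,\bar\tau)$. The hypothesis $0<m\le n$ of Corollary~\ref{Cor4} reduces to $a\ge2$, which holds, and one checks routinely that the extended data still satisfy Hypothesis~\ref{ovsigma}, since padding by parts equal to $a\ge2$ with overlaps equal to $1$ respects its constraints. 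Applying Corollary~\ref{Cor4} then yields exactly the first two terms on the right, the second of which has repeated part $(k+1)a$ because both $n+1$ and $m-1+\tau_1=ka+a$ equal $(k+1)a$.

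Next I would apply Corollary~\ref{Cor3} to each of the $M$ summands on the left in turn, taking $m=ka$ and $n=(k+1)a-1$ and now absorbing $(a+1,a^{i-1})$ into the leading block and $(a^{M-i},\tau)$ into the tail. The two middle parts $(ka,(k+1)a-1)$ and $((k+1)a-1,ka)$ share the overlap $ka=m$, matching the left-hand side of \eqref{Cor3Eq}, and the hypothesis $0\le m<n$ again reduces to $a\ge2$. Each application contributes a negative term and a positive term, and I would telescope exactly as in Lemma~\ref{Lem1}: the positive term of the $i$th application pairs with the negative term of the $(i+1)$th to form the $i$th summand on the right for $1\le i\le M-1$, while the unmatched negative term of the $i=1$ application becomes the third term on the right and the unmatched positive term of the $i=M$ application becomes the fourth.

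The only point requiring care is the boundary bookkeeping at the two ends of the telescope, rather than any genuine difficulty. At $i=1$ the leading block is $(\sigma,a+1)$, whose last part is $a+1$ rather than $a$; hence the negative term of \eqref{Cor3Eq} carries $m+\sigma_s=ka+(a+1)=(k+1)a+1$, producing the larger middle part in the third term on the right, and since this leading block is nonempty for every $\sigma$, that term persists even when $\sigma$ itself is empty. At $i=M$ the tail has reverted to the bare $\tau$, so the positive term of \eqref{Cor3Eq} reintroduces $\tau_1$ and $\tau\backslash\tau_1$, giving the fourth term; the convention accompanying Corollary~\ref{Cor3} removes it precisely when $t=0$, which is the proviso in the statement. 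Assembling the two terms from Corollary~\ref{Cor4} with the telescoped output of Corollary~\ref{Cor3} then gives the claimed identity.
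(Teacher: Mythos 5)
Your proposal is correct and follows essentially the same route as the paper's proof: Corollary~\ref{Cor4} with $m=ka+1$, $n=(k+1)a-1$ handles the two leading terms, and Corollary~\ref{Cor3} with $m=ka$, $n=(k+1)a-1$ is applied to each summand, with the positive term of the $i$th application and the negative term of the $(i+1)$th forming the $i$th summand on the right, the unmatched negative term at $i=1$ giving the third term and the unmatched positive term at $i=M$ giving the fourth. Your additional verifications (that the padded data satisfy Hypothesis~\ref{ovsigma}, and the boundary bookkeeping producing $(k+1)a+1$ and the $t=0$ proviso) are accurate refinements of what the paper leaves implicit.
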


\begin{proof}
The first two terms on the right result from applying
Corollary \ref{Cor4} to the first two terms on the left,
using $m=ka+1$ and $n=(k+1)a-1$.
The other terms result from applying Corollary \ref{Cor3}
to each summand on the left side, in turn, using $m=ka$
and $n=(k+1)a-1$.
The positive term in the $i$th summand on the right
is the positive term of \eqref{Cor3Eq} applied to the $i$th
summand on the left, and the negative term in the $i$th
summand on the right is the negative term of \eqref{Cor3Eq}
applied to the $(i+1)$th summand on the left.
The third term on the right is the negative term of
\eqref{Cor3Eq} applied to the first summand on the left,
and the fourth term on the right is the positive term of
\eqref{Cor3Eq} applied to the $M$th summand on the left.
\end{proof}

We now come to our main theorem.

\begin{theorem}\label{big-difference}
Assume $\sigma ,\tau ,\ov\sigma ,\ov\tau$ satisfy Hypothesis~\ref{ovsigma}.
Then, for $a\ge2$ and $n\ge2$
\begin{equation}\label{DifferenceEq}
\begin{split}
&\!\!
\{\sigma , a^n, \tau \, | \, \ov\sigma,1^{n-1} , \ov\tau\}\
- \ \{\sigma , a+1, a^{n-2}, a-1, \tau \, |\,  \ov\sigma,1^{n-1} , \ov\tau\}\\
& =\quad\sum_{k=1}^{\left\lfloor\frac{n}{2}\right\rfloor}
  \{\sigma,ka,ka,a^{n-2k},\tau\,|\,\sbar,ka,1^{n-2k},\tbar\}\\
& \quad+\sum_{k=1}^{\left\lfloor\frac{n-1}{2}\right\rfloor}
  \{\sigma,a+1,a^{n-2k-1},ka,ka-1+\tau_1,\tau\backslash\tau_1\,|\,
             \sbar, 1^{n-2k},ka,\tslash\}\\
& \quad+\sum_{k=1}^{\left\lfloor\frac{n-1}{2}\right\rfloor}
  \{\sigma,ka+1,(k+1)a-1,a^{n-2k-1},\tau\,|\,\sbar,ka+1,1^{n-2k-1}, \tbar\}\\
& \quad+\sum_{k=1}^{\left\lfloor\frac{n-2}{2}\right\rfloor}
  \{\sigma,a+1,a^{n-2k-2},(k+1)a-1,ka+\tau_1,\tau\backslash\tau_1\,|\,
             \sbar, 1^{n-2k-1},ka+1,\tslash\}\\[1mm]
& \quad+
\begin{cases}
  \{\sigma,\frac{n+1}2a,\frac{n-1}2a+\tau_1,\tau\backslash\tau_1\,|\,
             \sbar,\frac{n-1}2a+1,\tslash\}
                      &\text{if $n$ is odd;}\\[1.5mm]
  \{\sigma,\frac n2a+1,\frac n2a-1+\tau_1,\tau\backslash\tau_1\,|\,
             \sbar,\frac n2a,\tslash\}
                      &\text{if $n$ is even.}
\end{cases}
\end{split}
\end{equation}
If  $s=0$, then omit the $\sigma$ from
each expression.
If  $t=0$, then omit the terms in the
2nd, 4th, 5th and 6th lines,
and in the remaining terms, omit the $\tau$.
If $t=1$, then omit the $\tau\backslash\tau_1$ from the terms in
the 2nd, 4th, 5th and 6th lines.
The right side of \eqref{DifferenceEq} then has $2n-2$ terms
when $t>0$, and $n-1$ terms when $t=0$.\end{theorem}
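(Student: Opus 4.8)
The plan is to prove \eqref{DifferenceEq} by constructing a \emph{telescoping chain} that applies Lemma~\ref{Lem1} and Lemma~\ref{Lem2} alternately, peeling off the four sums summand by summand, and that is closed off by Corollary~\ref{Cor2} or Corollary~\ref{Cor4} according to the parity of $n$. Throughout the argument $\sigma,\tau,\sbar,\tbar$ stay fixed and continue to satisfy Hypothesis~\ref{ovsigma}, so the hypotheses of every result invoked hold automatically and only the integer parameters $k$ and the ``middle length'' $M$ vary.

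First I would identify the left side of \eqref{DifferenceEq} with the left side of Lemma~\ref{Lem1} evaluated at $k=1$, $M=n-2$. In that instance $ka-1=a-1$ and $ka=a$, so the two families of summands $\{\sigma,a+1,a^{i-1},a-1,a,a^{M-i},\tau\,|\,\sbar,1^{M+1},\tbar\}$ and $\{\sigma,a+1,a^{i-1},a,a-1,a^{M-i},\tau\,|\,\sbar,1^{M+1},\tbar\}$ telescope against each other and against the second term of that left side; what survives is exactly $\{\sigma,a^n,\tau\,|\,\sbar,1^{n-1},\tbar\}-\{\sigma,a+1,a^{n-2},a-1,\tau\,|\,\sbar,1^{n-1},\tbar\}$. (When $n=2$ this degenerates to the $M=0$ expression, closed off directly by Corollary~\ref{Cor2} as below.)

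Next I would run the chain. Applying Lemma~\ref{Lem1} with parameters $(k,M)=(1,n-2)$, the first and fourth terms on its right are precisely the $k=1$ summands of the first and second sums of \eqref{DifferenceEq}, which I set aside; the remaining second term, third term, and $i$-sum coincide verbatim with the left side of Lemma~\ref{Lem2} at $(k,M)=(1,n-3)$. Applying Lemma~\ref{Lem2} to that block peels off its first and fourth terms — the $k=1$ summands of the third and fourth sums — and leaves the left side of Lemma~\ref{Lem1} at $(2,n-4)$. Iterating, the $k$th pair of applications (Lemma~\ref{Lem1} at $(k,n-2k)$ followed by Lemma~\ref{Lem2} at $(k,n-2k-1)$) peels off exactly the $k$th summands of all four sums, while the residual block shrinks its middle length by two and advances $k$ by one. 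The chain halts once the middle length reaches zero: if $n$ is even this is the left side of Lemma~\ref{Lem1} at $(\tfrac n2,0)$, to which Corollary~\ref{Cor2} with both distinguished parts equal to $\tfrac n2 a$ and overlap value $(\tfrac n2-1)a+1$ yields the final $k=\tfrac n2$ summand of the first sum together with the even-$n$ boundary term; if $n$ is odd it is the left side of Lemma~\ref{Lem2} at $(\tfrac{n-1}2,0)$, to which Corollary~\ref{Cor4} with distinguished parts $\tfrac{n-1}2 a+1$ and $\tfrac{n+1}2 a-1$ and overlap value $\tfrac{n-1}2 a$ yields the final $k=\tfrac{n-1}2$ summand of the third sum together with the odd-$n$ boundary term. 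Collecting all peeled-off summands reproduces \eqref{DifferenceEq}.

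The hard part is the index bookkeeping at the two ends of the chain: one must check that the four sums receive exactly the advertised ranges $k=1,\dots,\lfloor n/2\rfloor$, $\lfloor(n-1)/2\rfloor$, $\lfloor(n-1)/2\rfloor$, $\lfloor(n-2)/2\rfloor$ and nothing more, that the parity of $n$ selects the correct terminating corollary and matches its output to the stated boundary term, and that the empty-sum cases for small $n$ together with the $s=0$, $t=0$, $t=1$ degenerations — under which various $\sigma$- and $\tau$-dependent terms are omitted per the conventions attached to the lemmas and corollaries — stay mutually consistent as they propagate along the chain. Everything else is the routine matching of overlap-notation arguments guaranteed by Lemmas~\ref{Lem1} and~\ref{Lem2} and Corollaries~\ref{Cor2} and~\ref{Cor4}.
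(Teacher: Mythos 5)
Your proposal is correct and takes essentially the same route as the paper's own proof: your opening telescoping identification of the left side of \eqref{DifferenceEq} with the left side of Lemma~\ref{Lem1} at $(k,M)=(1,n-2)$ is exactly the paper's equation \eqref{SplitEq}, and your alternating chain of Lemmas~\ref{Lem1} and~\ref{Lem2}, peeling off the first and fourth right-hand terms at each stage and closing with Corollary~\ref{Cor2} for even $n$ or Corollary~\ref{Cor4} for odd $n$, is precisely the paper's argument. Your phrasing of the terminal step as the residual ``left side at $M=0$'' is equivalent to the paper's handling of the leftover second and third terms of the final lemma application at $M=1$, so the two accounts agree in every detail.
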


\begin{example}\label{useful-ex} Before we prove this relation we
provide
four examples involving ribbon Schur functions. Note how the results
differ
for the cases $s=0=t$, $s\neq 0=t$, $s=0\neq t$ and $s\neq 0\neq t$.
$$r_{(2,2,2)}-r_{(3,2,1)}=\{2,2,2\, |\, 2,1\}+\{3,3\, |\, 3\}.$$
$$r_{(3,2,2,2)}-r_{(3,3,2,1)}=\{3,2,2,2\, |\, 1,2,1\}+\{3,3,3\, |\,
1,3\}.$$
$$r_{(2,2,2,1)}-r_{(3,2,1,1)}=\{2,2,2,1\ |\ 2,1,1\}+\{3,2,2\ |\
1,2\}+\{3,3,1\ |\ 3,1\}+\{4,3\ |\ 3\}.$$
$$r_{(3,2,2,2,1)}-r_{(3,3,2,1,1)}=\{3,2,2,2,1\ |\
1,2,1,1\}+\{3,3,2,2\ |\
1,1,2\}+\{3,3,3,1\ |\ 1,3,1\}+\{3,4,3\ |\ 1,3\}.$$
\end{example}


%

\begin{proof}[Proof of Theorem \ref{big-difference}]
To obtain expression \eqref{DifferenceEq} we first write
\begin{equation}\label{SplitEq}
\begin{split}
&\{\sigma , a^n, \tau \, | \, \ov\sigma,1^{n-1} , \ov\tau\}\ - \ \{\sigma , a+1, a^{n-2}, a-1, \tau \, |\,  \ov\sigma,1^{n-1} , \ov\tau\}\\[2mm]
&\qquad=
\{\sigma,a^n,\tau\,|\,\sbar, 1^{n-1}, \tbar\}
- \{\sigma,a+1,a-1,a^{n-2},\tau\,|\,\sbar, 1^{n-1}, \tbar\}\\
&\qquad\qquad+\sum_{i=1}^{n-2}
 \left(
   \{\sigma,a+1,a^{i-1},a-1,a,a^{n-2-i},\tau\,|\,\sbar, 1^{n-1}, \tbar\}\right.\\
&\qquad\qquad\qquad-
 \left.
   \{\sigma,a+1,a^{i-1},a,a-1,a^{n-2-i},\tau\,|\,\sbar, 1^{n-1}, \tbar\}\right).
\end{split}
\end{equation}
Now, starting with Lemma~\ref{Lem1} in the case $M=n-2$ and $k=1$,
we
repeatedly apply Lemmas~\ref{Lem1} and \ref{Lem2} alternately.
Each application of Lemma~\ref{Lem1} decreases $M$ by $1$ but leaves
$k$ fixed,
and each application of Lemma~\ref{Lem2} decreases $M$ by $1$ and
increases
$k$ by $1$.
The final application is made with $M=1$,
and is of  Lemma~\ref{Lem1} with $k=\frac{n-1}2$ if $n$ is odd,
and of Lemma~\ref{Lem2} with $k=\frac{n-2}2$ if $n$ is even.

Following each application, the first and fourth terms on the
right side of the expression in Lemma \ref{Lem1} or Lemma \ref{Lem2}
contribute to the final expression \eqref{DifferenceEq}.
The remaining terms (second, third and the summation)
are then acted upon by the subsequent application of
Lemma \ref{Lem1} or Lemma \ref{Lem2}.
When applying Lemma \ref{Lem1}, the first and
fourth terms on the right give rise to the $k$th summands
in the first and second terms on the right of \eqref{DifferenceEq}.
The summands $1\le k\le \lfloor\frac{n-1}2\rfloor$ of these
latter two terms arise in this way.
Similarly, when applying Lemma \ref{Lem2}, the first and
fourth terms on the right give rise to the $k$th summands
in the third and fourth terms on the right of \eqref{DifferenceEq}.
The summands $1\le k\le \lfloor\frac{n-2}2\rfloor$ of these
latter two terms arise in this way.

If $n$ is odd, the final application of Lemma \ref{Lem1} is
made with $M=1$ and $k=\frac{n-1}2$.
It remains to deal with the second and third terms that arise
on the right of the expression in Lemma \ref{Lem1}.
This is accomplished using Corollary \ref{Cor4}, which yields
\begin{equation}
\nonumber
\begin{split}
&\{\sigma,ka+1,(k+1)a-1,\tau\,|\,
           \sbar,ka,\tbar\}
- \{\sigma,(k+1)a,ka,\tau\,|\,
           \sbar,ka,\tbar\}\\[1mm]
&\qquad=
\{\sigma,ka+1,(k+1)a-1,\tau\,|\,
           \sbar,ka+1,\tbar\}\\
&\qquad\qquad+
 \{\sigma,(k+1)a,ka+\tau_1,\tau\backslash\tau_1\,|\,
           \sbar,ka+1,\tslash\}.
\end{split}
\end{equation}
The first term on the right here gives the additional
$k=\frac {n-1}2$ summand of
the third term on the right of \eqref{DifferenceEq},
and the second term gives the odd $n$ case of the final term.

If $n$ is even, the final application of Lemma \ref{Lem2} is
made with $M=1$ and $k=\frac{n-2}2$.
It remains to deal with the second and third terms that arise
on the right of the expression in Lemma \ref{Lem2}.
This is accomplished using Corollary \ref{Cor2}, which yields
\begin{equation}
\nonumber
\begin{split}
&\{\sigma,(k+1)a,(k+1)a,\tau\,|\,
           \sbar,ka+1,\tbar\}
- \{\sigma,(k+1)a+1,(k+1)a-1,\tau\,|\,
           \sbar,ka+1,\tbar\}\\[1mm]
&\qquad=
\{\sigma,(k+1)a,(k+1)a,\tau\,|\,
           \sbar,(k+1)a,\tbar\}\\
&\qquad\qquad+
 \{\sigma,(k+1)a+1,(k+1)a-1+\tau_1,\tau\backslash\tau_1\,|\,
           \sbar,(k+1)a,\tslash\}.
\end{split}
\end{equation}
The first term on the right here gives the additional $k=\frac n2$
summand of the first term on the right of \eqref{DifferenceEq},
and the second term gives the even $n$ case of the final term.
This completes the proof of \eqref{DifferenceEq}.\end{proof}

\section{Schur positivity of ribbon and skew Schur functions}\label{sec:positivity}

We now apply our results from the previous section to derive some new differences of skew Schur functions that are Schur positive.

The crucial feature of the formulae \eqref{Cor2Eq} and \eqref{DifferenceEq} 
for the differences of certain skew Schur functions is that they are expressed as wholly positive sums of skew Schur functions, each of which is itself necessarily Schur positive. This leads immediately to the following theorem.
\begin{theorem}\label{big-posdifference}

Assume $\sigma ,\tau ,\ov\sigma ,\ov\tau$ satisfy Hypothesis~\ref{ovsigma}. If $a\geq b\geq 2$ then
$$\{\sigma ,a,b, \tau\, |\, \ov\sigma,1 , \ov\tau\}\ - \ \{\sigma , a+1, b-1, \tau\, | \, \ov\sigma,1 , \ov\tau\}$$and
$$\{\sigma , a^n, \tau \, | \, \ov\sigma,1^{n-1} , \ov\tau\}\ - \ \{\sigma , a+1, a^{n-2}, a-1, \tau \, |\,  \ov\sigma,1^{n-1} , \ov\tau\}$$are Schur positive, where $\{ \alpha \, |\, \beta\}$ denotes the skew Schur function $s_{\lambda /\mu}$ satisfying 
$r^{(1)}(\lambda /\mu)=\alpha$ and $r^{(2)}(\lambda /\mu)=\beta$.
\end{theorem}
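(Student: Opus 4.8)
The plan is to derive both Schur positivity statements directly from the formulae established earlier, reducing everything to the basic observation that every skew Schur function is itself Schur positive. Indeed, by \eqref{skew-schur-def-schur} each $s_{\lambda/\mu}$ expands as $\sum_\nu c^\lambda_{\mu\nu}s_\nu$ with non-negative Littlewood--Richardson coefficients, so any expression written as a sum (rather than a difference) of skew Schur functions with non-negative coefficients is automatically Schur positive. The entire strategy is therefore to recognise each of the two differences in the statement as exactly such a wholly positive sum, invoking the relations proved in Section~\ref{sec:relations}.

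For the second difference, $\{\sigma,a^n,\tau\,|\,\ov\sigma,1^{n-1},\ov\tau\}-\{\sigma,a+1,a^{n-2},a-1,\tau\,|\,\ov\sigma,1^{n-1},\ov\tau\}$, I would simply apply Theorem~\ref{big-difference}. The right-hand side of \eqref{DifferenceEq} is a sum of skew Schur functions, each appearing with coefficient $+1$; there are no subtractions among the surviving terms. Since each summand $\{\alpha\,|\,\beta\}$ is a genuine skew Schur function and hence Schur positive, the total is Schur positive, and this case is immediate once one checks that every term in \eqref{DifferenceEq} is manifestly of the form $s_{\lambda/\mu}$ for an admissible skew shape (which is guaranteed by Hypothesis~\ref{ovsigma} controlling the overlap sequences).

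For the first difference, $\{\sigma,a,b,\tau\,|\,\ov\sigma,1,\ov\tau\}-\{\sigma,a+1,b-1,\tau\,|\,\ov\sigma,1,\ov\tau\}$ with $a\ge b\ge 2$, I would appeal to Corollary~\ref{Cor2}. Setting $m=a$, $n=b$, and the overlap parameter $x=1$ (which satisfies $0\le x<n\le m$ precisely because $b\ge 2$ and $a\ge b$), the corollary expresses this difference as $\{\sigma,a,b,\tau\,|\,\ov\sigma,b,\ov\tau\}+\{\sigma,a+1,b-1+\tau_1,\tau\backslash\tau_1\,|\,\ov\sigma,b,\tslash\}$, again a sum of two skew Schur functions with positive coefficients (with the second term omitted when $t=0$). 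Schur positivity then follows as before.

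The main thing to verify carefully — and what I expect to be the only real obstacle — is that the hypotheses line up so that Corollary~\ref{Cor2} and Theorem~\ref{big-difference} genuinely apply, and that each term on their right-hand sides is a legitimate skew Schur function rather than a formal symbol. Concretely, one must confirm that the overlap sequences appearing in each summand correspond to an actual skew diagram $\lambda/\mu$ via \eqref{Eq-skew-overlap}, i.e.\ that the stated $r^{(1)}$ and $r^{(2)}$ data are realisable; this is exactly where conditions (2)--(4) of Hypothesis~\ref{ovsigma} are used, since they ensure the overlap entries are compatible with a valid Ferrers shape. Once this bookkeeping is done, no further computation is needed: the positivity is structural, inherited termwise from the non-negativity of Littlewood--Richardson coefficients.
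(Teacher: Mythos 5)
Your proposal is correct and is essentially the paper's own argument: the paper deduces Theorem~\ref{big-posdifference} immediately from the observation that \eqref{Cor2Eq} (applied exactly as you do, with $m=a$, $n=b$, $x=1$, so that $0\le x<n\le m$ becomes $a\ge b\ge2$) and \eqref{DifferenceEq} express the two differences as wholly positive sums of skew Schur functions, each of which is Schur positive by \eqref{skew-schur-def-schur}. Your final bookkeeping concern is handled implicitly in the paper, since those identities are established as identities among generating functions of actual sets of SSYTx, so every term on their right-hand sides is a genuine skew Schur function by construction.
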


The \emph{descent set} of a SYT, $T$, is the set of all
entries $i$ such that $i+1$ appears in a lower row than $i$.
With this in mind we recall the following relationship
between ribbon Schur functions $r_\alpha$ for $\alpha\vDash N$,
and Schur functions  $s_\lambda$ for $\lambda\vdash N$.
 
\begin{lemma}\cite[Theorem 7]{Gessel}\label{theorem-ribbon-schur}
Let $\alpha \vDash N$ then
\begin{equation}\label{ribbon-schur}
r_\alpha =\sum_{\lambda\vdash N} d_{\lambda\alpha} s_\lambda
\end{equation}
where $d_{\lambda\alpha}$ is the number of SYTx of shape $\lambda$
and descent set $S(\alpha)$.
 \end{lemma}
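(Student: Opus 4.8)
The plan is to pass to the ring of quasisymmetric functions and exploit the \emph{fundamental quasisymmetric functions} $F_{N,S}$, indexed by $N\ge0$ and $S\subseteq\{1,\dots,N-1\}$, defined by $F_{N,S}=\sum x_{i_1}\cdots x_{i_N}$ over all $1\le i_1\le\cdots\le i_N$ with $i_j<i_{j+1}$ whenever $j\in S$. The engine of the whole argument is the Robinson--Schensted--Knuth correspondence together with its descent-preservation property, so I would organise the proof around comparing the $F$-expansions of the two sides of \eqref{ribbon-schur}, using the linear independence of the $F_{N,S}$ to reduce to an equality of coefficients.

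First I would establish the standardization identity that, for any skew diagram $D$ of size $N$, one has $s_D=\sum_{T}F_{N,\mathrm{Des}(T)}$, summed over SYTx $T$ of shape $D$, where $\mathrm{Des}(T)$ is the descent set of $T$ in the sense defined just above the lemma. This follows by sorting the SSYTx counted by \eqref{skew-schur-def-monomial} according to the standard tableau produced by standardization: the monomials in a single standardization class sum to exactly one fundamental quasisymmetric function, indexed by the descent set of that standard tableau. Applying this with $D$ the ribbon of shape $\alpha$ gives $r_\alpha=\sum_{T}F_{N,\mathrm{Des}(T)}$ over ribbon SYTx, and applying it with $D=\lambda$ a straight shape gives $s_\lambda=\sum_{U}F_{N,\mathrm{Des}(U)}$. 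It then suffices to check that the coefficient of each $F_{N,S(\beta)}$ agrees on both sides.

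Next I would compute the coefficient of $F_{N,S(\beta)}$ in the right-hand side $\sum_\lambda d_{\lambda\alpha}s_\lambda$. Expanding each $s_\lambda$ as above, this coefficient is $\sum_\lambda(\#\{U:\mathrm{Des}(U)=S(\alpha)\})(\#\{U':\mathrm{Des}(U')=S(\beta)\})$ with $U,U'$ ranging over SYTx of shape $\lambda$, i.e.\ the number of pairs $(U,U')$ of standard tableaux of a common shape with $\mathrm{Des}(U)=S(\alpha)$ and $\mathrm{Des}(U')=S(\beta)$. By RSK (the bijection $w\mapsto(P(w),Q(w))$ onto pairs of equal shape), together with $\mathrm{Des}(Q(w))=\mathrm{Des}(w)$ and $\mathrm{Des}(P(w))=\mathrm{Des}(Q(w^{-1}))=\mathrm{Des}(w^{-1})$, this count equals the number of permutations $w\in S_N$ with $\mathrm{Des}(w^{-1})=S(\alpha)$ and $\mathrm{Des}(w)=S(\beta)$.

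It then remains to match this with the left-hand coefficient, namely the number of SYTx of the ribbon of shape $\alpha$ whose descent set is $S(\beta)$. Here I would construct the reading bijection sending a standard ribbon tableau $T$ to the permutation $w$ obtained by listing its entries along the ribbon path; the ribbon constraints force $\mathrm{Des}(w)$ (or $\mathrm{Des}(w^{-1})$, according to the reading convention) to equal $S(\alpha)$ identically, while the remaining descent data records $\mathrm{Des}(T)$. Comparing the two counts via the involution $w\leftrightarrow w^{-1}$, which interchanges $\mathrm{Des}(w)$ and $\mathrm{Des}(w^{-1})$, yields equality of coefficients and hence \eqref{ribbon-schur}. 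The main obstacle is precisely this last step: fixing the reading convention on the ribbon and verifying carefully that it carries the forced strict-decrease positions to exactly $S(\alpha)$ while identifying the tableau descent set with the inverse-permutation descent set. Everything else is routine once the descent-preservation property of RSK is invoked.
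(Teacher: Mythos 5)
Your overall strategy --- expanding both sides in the fundamental quasisymmetric functions and matching coefficients via RSK --- is legitimate and genuinely different from the paper's proof, which instead applies the Littlewood--Richardson rule to the ribbon and uses the classical bijection between lattice words and SYTx. Your first three steps are sound: the expansion $s_D=\sum_T F_{N,\mathrm{Des}(T)}$ over SYTx of any skew shape $D$, the linear independence of the $F_{N,S}$, and the identification (via descent-preservation of RSK) of the coefficient of $F_{N,S(\beta)}$ in $\sum_\lambda d_{\lambda\alpha}s_\lambda$ with $\#\{w\in S_N:\mathrm{Des}(w^{-1})=S(\alpha),\ \mathrm{Des}(w)=S(\beta)\}$ are all correct.

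The gap is in your final step: no reading convention on the ribbon does what you claim. With the paper's conventions (row lengths $\alpha_1,\ldots,\alpha_k$ listed from top to bottom, $S(\alpha)$ the set of partial sums, a descent of a SYT meaning $i+1$ lies strictly below $i$), reading a standard ribbon tableau $T$ along the ribbon path from the bottom-left box gives a permutation $u$ whose descent set is forced to be $S(\alpha^r)$, where $\alpha^r=(\alpha_k,\ldots,\alpha_1)$, with $\mathrm{Des}(u^{-1})=\mathrm{Des}(T)$; reading from the top-right end forces instead the complement $[N-1]\setminus S(\alpha)$. Concretely, for $\alpha=(2,1)$ the ribbon is the straight shape $(2,1)$, and its two SYTx read along the path give the words $312$ and $213$, both with descent set $\{1\}=S((1,2))$, not $S((2,1))=\{2\}$. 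So your left-hand coefficient equals $\#\{w:\mathrm{Des}(w^{-1})=S(\alpha^r),\ \mathrm{Des}(w)=S(\beta)\}$, which does not match the right-hand count as written. The two counts are indeed equal, but this is equivalent to $d_{\lambda\alpha}=d_{\lambda\alpha^r}$ for all $\lambda$, a non-trivial fact requiring, e.g., Sch\"utzenberger evacuation (a shape-preserving involution on SYTx that reverses descent sets) applied to one tableau of the RSK pair, or alternatively running your argument for $\alpha^r$ and invoking the rotation identity $r_\alpha=r_{\alpha^r}$. Either repair needs an idea absent from your writeup. Note how the paper's route avoids this entirely: by the Littlewood--Richardson rule the coefficient of $s_\lambda$ in $r_\alpha$ counts lattice reading words (read right-to-left within rows, top to bottom) of content $\lambda$, whose ascent set is forced to be exactly $S(\alpha)$, and the bijection placing $i$ in row $w_i$ converts the ascent set of the lattice word into the descent set of a SYT of shape $\lambda$, so $S(\alpha)$ appears on the nose with no reversal to reconcile.
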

 
This lemma, together with Theorem~\ref{big-posdifference},  allows us to prove the following result first conjectured by McNamara \cite{McN},
which is analogous to Theorem~\ref{h-schur-positive}.

\begin{theorem}\label{positive-ribbon-theorem}
Let $\lambda , \mu\vdash N$ then
$$r_\mu -r _\lambda$$is Schur positive if and only if  $\mu \dom \lambda$ and $\ell(\lambda)=\ell(\mu)$.
\end{theorem}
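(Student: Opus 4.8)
The plan is to prove the two implications separately, using Theorem~\ref{big-posdifference} for sufficiency and Lemma~\ref{theorem-ribbon-schur} for necessity; the whole statement is the exact ribbon analogue of the classical Theorem~\ref{h-schur-positive}, with the dominance cover relations of Section~\ref{sec:partitions_and_diagrams} providing the bridge. For the \emph{sufficiency} direction, suppose $\mu\dom\lambda$ and $\ell(\mu)=\ell(\lambda)=\ell$. First I would fix a saturated chain $\mu=\nu^{(0)}\,\triangleleft_{dom}\,\cdots\,\triangleleft_{dom}\,\nu^{(r)}=\lambda$ in the dominance order. The key preliminary remark is that every $\nu^{(j)}$ again has exactly $\ell$ parts: since conjugation reverses dominance, $\mu\dom\nu\dom\lambda$ forces $\ell(\mu)\ge\ell(\nu)\ge\ell(\lambda)$, whence $\ell(\nu)=\ell$. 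Thus each cover in the chain is length preserving, and inspecting the two families of covers recalled in Section~\ref{sec:partitions_and_diagrams} shows that a length-preserving cover is either $(\kappa,a,b,\pi)\,\triangleleft_{dom}\,(\kappa,a+1,b-1,\pi)$ with $a\ge b\ge2$, or $(\kappa,a^n,\pi)\,\triangleleft_{dom}\,(\kappa,a+1,a^{n-2},a-1,\pi)$ with $a\ge2$ and $n\ge3$ (the inequality $b\ge2$, respectively $a\ge2$, being precisely what prevents a part from vanishing). Writing each ribbon in overlap notation with all overlaps equal to $1$, that is, taking $\sigma=\kappa$, $\tau=\pi$, $\ov\sigma=1^{\ell(\kappa)}$ and $\ov\tau=1^{\ell(\pi)}$ (which satisfy Hypothesis~\ref{ovsigma} since every part of a partition is at least $1$), each cover becomes exactly one of the two differences proved Schur positive in Theorem~\ref{big-posdifference}. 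Hence each $r_{\nu^{(j-1)}}-r_{\nu^{(j)}}$ is Schur positive, and telescoping shows $r_\mu-r_\lambda=\sum_{j}\bigl(r_{\nu^{(j-1)}}-r_{\nu^{(j)}}\bigr)$ is Schur positive.

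For the \emph{necessity} direction I would first record the Schur support of a ribbon. By Lemma~\ref{theorem-ribbon-schur}, $s_\nu$ occurs in $r_\alpha$ exactly when there is a standard Young tableau of shape $\nu$ with descent set $S(\alpha)$. Cutting such a tableau along $S(\alpha)$ into its successive increasing runs exhibits these runs as horizontal strips of sizes $\alpha_1,\ldots,\alpha_{\ell(\alpha)}$, and reading them off as the values $1,2,\ldots$ of a semistandard filling produces a semistandard tableau of shape $\nu$ and content $\alpha$; hence $\nu\unrhd\tilde\alpha$, where $\tilde\alpha$ is the partition obtained by sorting $\alpha$. Thus every partition in the support of $r_\alpha$ dominates $\tilde\alpha$, while $s_{\tilde\alpha}$ itself occurs (a witnessing tableau being readily constructed, and for $\alpha$ a partition it is simply the row-superstandard tableau). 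Now assume $r_\mu-r_\lambda$ is Schur positive. Comparing the coefficients of $s_\lambda$, and using $\tilde\lambda=\lambda$, gives $d_{\lambda\mu}\ge d_{\lambda\lambda}\ge1$, so $s_\lambda$ occurs in $r_\mu$ and therefore $\lambda\unrhd\mu$; that is, $\mu\dom\lambda$, which in particular forces $\ell(\mu)\ge\ell(\lambda)$.

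It remains to prove $\ell(\mu)\le\ell(\lambda)$. Here I would invoke the fundamental involution $\omega$, which permutes the Schur basis by $\omega(s_\nu)=s_{\nu'}$ (conjugate partition) and, since conjugating a standard tableau complements its descent set, satisfies $\omega(r_\alpha)=r_{\alpha^c}$, where $\alpha^c$ is the composition whose descent set is the complement of $S(\alpha)$ and hence has $\ell(\alpha^c)=N-\ell(\alpha)+1$. As $\omega$ preserves Schur positivity, $r_{\mu^c}-r_{\lambda^c}$ is Schur positive, and the coefficient argument just given, now applied to the compositions $\lambda^c,\mu^c$ (using that $s_{\widetilde{\lambda^c}}$ occurs in $r_{\lambda^c}$ and that the support of $r_{\mu^c}$ dominates $\widetilde{\mu^c}$), yields $\widetilde{\lambda^c}\unrhd\widetilde{\mu^c}$. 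Comparing lengths, and using that sorting preserves the number of parts, gives $N-\ell(\lambda)+1=\ell(\widetilde{\lambda^c})\le\ell(\widetilde{\mu^c})=N-\ell(\mu)+1$, that is $\ell(\mu)\le\ell(\lambda)$. Combined with the previous inequality this gives $\ell(\mu)=\ell(\lambda)$.

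I expect the main obstacle to be the bookkeeping on the sufficiency side that matches each length-preserving dominance cover with a single instance of Theorem~\ref{big-posdifference}: one must verify that length preservation forces exactly the inequalities $b\ge2$ (respectively $a\ge2$ and $n\ge3$) demanded there, and that the all-ones overlap data of a ribbon really meets Hypothesis~\ref{ovsigma}, including the degenerate cases $s=0$ or $t=0$ where $\kappa$ or $\pi$ is empty. On the necessity side the only nonformal input is the support statement, which reduces to the positivity criterion for Kostka numbers, together with the fact that the dominance-minimal term $s_{\tilde\alpha}$ genuinely occurs in $r_\alpha$; once these are in hand, the coefficient comparisons and the $\omega$-duality are routine.
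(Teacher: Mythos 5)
Your proposal is sound, and its sufficiency half is the paper's own argument made more explicit: the paper likewise reduces to dominance covers among partitions of equal length and feeds each cover into Theorem~\ref{big-posdifference}, with all-ones overlap data; your observations that every partition in a saturated chain between $\mu$ and $\lambda$ again has length $\ell$ (via conjugation reversing dominance), and that length preservation is precisely the condition $b\geq 2$ (resp.\ $a\geq 2$), are the bookkeeping the paper leaves implicit. The first half of your necessity argument (Schur positivity forces $\mu\dom\lambda$) is also essentially the paper's: the paper shows directly that no SYT of shape $\lambda$ can have descent set $S(\mu)$ when $\mu\not\dom\lambda$, which is your Kostka-positivity argument in different clothing. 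Where you genuinely depart from the paper is the proof that $\ell(\mu)\leq\ell(\lambda)$: the paper works with the \emph{lexicographically greatest} partition $\phi(\lambda)$ in the support of $r_\lambda$, computes it explicitly as $(N-\ell(\lambda)+1,\ell(\rho),1^{a-1})$ for $\lambda=(\rho,1^a)$, exhibits it by an explicit SYT, and notes that $\ell(\lambda)<\ell(\mu)$ makes $\phi(\mu)$ lexicographically strictly smaller than $\phi(\lambda)$, so $s_{\phi(\lambda)}$ cannot occur in $r_\mu$. You instead apply the involution $\omega$ and rerun the dominance-minimal support bound on $r_{\mu^c}-r_{\lambda^c}$. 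Your route is more conceptual (one support inequality, used twice through duality, plus the classical facts $\omega(r_\alpha)=r_{\alpha^c}$ and that $\omega$ permutes the Schur basis); the paper's is entirely self-contained, resting only on explicitly displayed tableaux.

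The price of your route is the claim you describe as ``readily constructed'': that $s_{\widetilde{\alpha}}$ genuinely occurs in $r_\alpha$, where $\widetilde{\alpha}$ is the partition obtained by sorting the composition $\alpha$. You need this for $\alpha=\lambda^c$, and $\lambda^c$ is in general \emph{not} a partition (for $\lambda=(2,2)$ one gets $\lambda^c=(1,2,1)$), so the row-superstandard tableau does not settle it; this is a genuine lemma, not a formality, and as written your proof of $\ell(\mu)\leq\ell(\lambda)$ is incomplete without it. The lemma is true and can be proved by induction on $\ell(\alpha)$: take a lattice word of content $\widetilde{(\alpha_1,\ldots,\alpha_{k-1})}$ that is weakly decreasing on blocks of sizes $\alpha_1,\ldots,\alpha_{k-1}$ and has ascents exactly at the block boundaries, and append as a final block the letters by which the content must grow to become $\widetilde{\alpha}$, arranged in weakly decreasing order; the first appended letter is $k$, so the boundary is an ascent, and one checks prefix by prefix that after the copies of each letter $j$ are appended its multiplicity equals the prior multiplicity of $j-1$, so the word stays lattice. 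Supplying this argument (or a citation for it) closes the gap; alternatively, you can replace the $\omega$-argument wholesale by the paper's lex-greatest tableau, which avoids the lemma entirely.
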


\begin{proof} First we prove that if   $\mu \not\dom \lambda$ or $\ell(\lambda)\neq\ell(\mu)$ then $r_\mu -r _\lambda$ is not Schur positive. Note that by Lemma~\ref{theorem-ribbon-schur} if $S(\lambda) = \{ i_1, i_2, \ldots , i_{\ell(\lambda)-1}\}$ then in any SYT, $T$, of shape $\nu $ and descent set $S(\lambda)$ contributing towards the coefficient of $s_\nu$ in the Schur function expansion of $r_\lambda$, the entries $1,\ldots , i_j$ must appear in the top $j$ rows of $T$. Moreover, $s_\lambda$ appears in the Schur function expansion of $r_\lambda$ with positive coefficient due to the SYT
$$\begin{matrix}
1&2&3&\cdots&i_1\\
i_1+1&\cdots & &i_2\\
\vdots&\ddots&\\
i _{\ell(\lambda)-1}+1&\cdots&N
\end{matrix}.$$Consequently, if $\mu\not\dom \lambda$ then for some $i$ we have
$$\lambda _1 +\cdots + \lambda _i < \mu _1 +\cdots + \mu _i$$and there cannot exist a SYT of shape $\lambda$ and descent set $S(\mu)$ because the numbers $1,\ldots , \mu _1 +\cdots + \mu _i$  must appear in only $\lambda _1 +\cdots + \lambda _i$ boxes. Hence by Lemma~\ref{theorem-ribbon-schur} it follows that $s_\lambda$ is a term in the Schur function expansion of $r_\lambda$ but not $r_\mu$ and so $r_\mu-r_\lambda$ is not Schur positive.

Now note that if $\mu \dom \lambda$ then since
$$\lambda _1 +\cdots + \lambda _i \geq \mu _1 +\cdots + \mu _i$$for all $i$ it follows that $\ell(\lambda )\leq \ell(\mu)$. We need to show that if  $\ell(\lambda )< \ell(\mu)$ then $r_\mu -r_\lambda$ is not Schur positive. Observe by Lemma~\ref{theorem-ribbon-schur} that if $\lambda = (\rho, 1^a)$ where $\rho$ is a partition with $\rho _{\ell(\rho)}>1$ and $a>0$ then $S(\lambda)= \{i_1, i_2, \ldots , i_{\ell(\rho)}, N-a+1, \ldots, N-1\}$ and the lexicographically greatest partition appearing as an index of a Schur function in the Schur function expansion of $r_\lambda$ is $\phi(\lambda)=(N-\ell(\rho)-a+1, \ell(\rho), 1^{a-1})=(N-\ell(\lambda)+1, \ell(\rho), 1^{a-1})$ due to the SYT
$$\begin{matrix}
1&2&3&\cdots&\cdots&i_1&\cdots&i_2&\cdots  &i_{\ell(\rho)}\\
i_1+1&i_2+1&\cdots  &i_{\ell(\rho)-1}+1&N-a+1\\
N-a+2\\
\vdots\\
N
\end{matrix}.$$If $a=0$ then $S(\lambda)= \{i_1, i_2, \ldots , i_{\ell(\rho)-1}\}$ and Lemma~\ref{theorem-ribbon-schur} shows the lexicographically greatest partition appearing as an index of a Schur function in the Schur function expansion of $r_\lambda$ is $\phi(\lambda)=(N-\ell(\rho)+1, \ell(\rho)-1)=(N-\ell(\lambda)+1, \ell(\rho)-1)$ due to the SYT
$$\begin{matrix}
1&2&3&\cdots&\cdots&i_1&\cdots&i_2&\cdots  &N\\
i_1+1&i_2+1&\cdots  &i_{\ell(\rho)-1}+1\\
\end{matrix}.$$Consequently, if $\ell(\lambda)<\ell(\mu)$ we have $\phi(\mu)< _{lex} \phi(\lambda)$ and so $s_{\phi(\lambda)}$ is a term in the Schur function expansion of $r_\lambda$ but not $r_\mu$ and so $r_\mu-r_\lambda$ is not Schur positive.

Now we prove that if   $\mu \dom \lambda$ and $\ell(\lambda)=\ell(\mu)$ then $r_\mu -r _\lambda$ is  Schur positive. Note we only need to show that the result holds for each of the types of cover relation in the dominance order with $\ell(\lambda)=\ell(\mu)=\ell$. For the first cover relation, if $\lambda = (\sigma , a+1, b-1, \tau), \mu = (\sigma , a,b, \tau)$ and $\sigma = (\sigma _1, \ldots , \sigma _s)$,  $\tau = (\tau _1, \ldots , \tau _t)$ with $\sigma _s\geq a+1$, $b-1\geq \tau _1$ and $a\geq b\geq 2$ then $r_\mu -r_\lambda$ is Schur positive by Theorem~\ref{big-posdifference} with 
$r_\mu = \{\sigma , a,b, \tau\, |\,  1^{\ell - 1}\}$ and $r_\lambda = \{\sigma , a+1, b-1, \tau\, | \,  1^{\ell - 1}\}$. For the second cover relation, if  $\lambda = (\sigma , a+1, a^{n-2}, a-1, \tau), \mu = (\sigma , a^n, \tau)$ and $\sigma = (\sigma _1, \ldots , \sigma _s)$,  $\tau = (\tau _1, \ldots , \tau _t)$ with  $\sigma _s\geq a+1$, $a-1\geq \tau _1$ and $n\geq 2$ then $r_\mu -r_\lambda$ is Schur positive by Theorem~\ref{big-posdifference} with 
$r_\mu = \{\sigma , a^n, \tau\, |\,  1^{\ell - 1}\}$ and $r_\lambda = \{\sigma , a+1, a^{n-2}, a-1, \tau\, | \,  1^{\ell - 1}\}$. The proof is now complete.
\end{proof}

 \section{Application to Schubert calculus}
\label{sec:schubert}

In view of the well established link between the algebra of Schur
functions
and Schubert calculus~\cite{Manivel, ECII},
it is appropriate to examine the consequences of Theorem~3.1 in the
Schubert
calculus context.

Let $\Gr(\ell,\C^m)$ denote the Grassmannian of $\ell$-dimensional
subspaces in $\C^m$. The cohomology ring $H^\ast(\Gr(\ell,\C^m),\Z)$
has an additive basis of Schubert classes, $\sigma_\lambda$, indexed by partitions
$\lambda\subseteq (k^\ell)$, where $k=m-\ell$.
The product rule for Schubert classes in $H^\ast(\Gr(\ell,\C^m),\Z)$
takes the following form, see for example~\cite{Manivel}
\begin{equation}
\label{Eq-schubert-prod}
 \sigma_\lambda \cdot \sigma_\mu =\sum_{\nu\subseteq(k^\ell)} c_{\lambda\mu}^\nu\ \sigma_\nu\,.
\end{equation}
This differs from the product rule for Schur functions
\begin{equation}
\label{Eq-schur-prod}
 s_\lambda \cdot s_\mu =\sum_{\nu} c_{\lambda\mu}^\nu\ s_\nu\,
\nonumber
\end{equation}
only in the restriction placed on the partitions $\nu$. The coefficients
in both products are identical, namely the familiar non-negative integer
Littlewood-Richardson coefficients.

One way to evaluate the product of two Schubert classes is to proceed
by Schur function methods in a manner that automatically restricts
partitions $\nu$ to those satisfying the constraint $\nu\subseteq(k^\ell)$.
To this end, consider the skew Schur function $s_{(k^\ell)/\nu}$. 
For any partition $\nu\subseteq(k^\ell)$, we 
let $\nucomp$ denote the partition complementary to $\nu$ in an
$\ell\times k$ rectangle, which is to say
\begin{equation}
\label{Eq-nucomp}
 \nucomp_i=k-\nu_{\ell-i+1}~~~~\text{~~~~for $i=1,2\ldots,\ell$}\,.
\end{equation}
Diagrammatically, the Young diagram of shape $\nucomp$ 
is obtained by rotating that of shape $(k^\ell)/\nu$
through $180^\circ$. 
For example, if $\ell=4$, $k=5$ and $\nu=(5,4,2,1)$,
then $\nucomp=(4,3,1)$ as illustrated by
\begin{equation}
\begin{matrix}
 \cdot&\cdot&\cdot&\cdot&\cdot\cr
 \cdot&\cdot&\cdot&\cdot&\times\cr
 \cdot&\cdot&\times&\times&\times\cr
 \cdot&\times&\times&\times&\times\cr
\end{matrix}
\quad\longrightarrow\quad
\begin{matrix}
 \times&\times&\times&\times&\cdot\cr
 \times&\times&\times&\cdot&\cdot\cr
 \times&\cdot&\cdot&\cdot&\cdot\cr
 \cdot&\cdot&\cdot&\cdot&\cdot\cr
\end{matrix}
\nonumber
\end{equation}
where $\times$ indicates a box, and $\cdot$ no box.

With this definition of $\nucomp$, we have
\begin{equation}
\label{Eq-nuskew-nucomp}
  s_{(k^\ell)/\nu}=
  \begin{cases}
    \ s_\nucomp&\text{if $\nu\subseteq(k^\ell)$};\cr
    \ 0&\text{otherwise}.\cr
  \end{cases}
\end{equation}

The first case in \eqref{Eq-nuskew-nucomp} is \cite[Exercise 7.56(a)]{ECII}.
The other case in \eqref{Eq-nuskew-nucomp} is perhaps obvious, but may
be seen formally by noting that the definition of skew Schur 
functions~\cite{Macdonald} implies that
\begin{equation}
   \langle s_{(k^\ell)/\nu}\,,\,s_\rho\rangle=\langle s_{(k^\ell)}\,,\,s_\nu\,s_\rho\rangle
\nonumber
\end{equation} 
for all $\rho$, where the bilinear form $\langle \cdot\,,\,\cdot\rangle$
on symmetric functions is such that $\langle s_\lambda\,,\,s_\mu\rangle=\delta_{\lambda\mu}$.
However, it follows from the Littlewood-Richardson rule
that the right hand side is zero for all $\rho$ if $\nu\not\subseteq(k^\ell)$.
Hence, in such a case $s_{(k^\ell)/\nu}=0$, as claimed in \eqref{Eq-nuskew-nucomp}.

Now consider the skew Schur function $s_{\mucomp/\lambda}$. This
can be expanded in terms of Schur functions in two ways as follows:
\begin{equation}
\label{Eq-skew-mu-lambda1}
s_{\mucomp/\lambda}=\sum_\rho\ c_{\lambda\rho}^\mucomp\ s_\rho\,
\nonumber
\end{equation}
and
\begin{equation}
\label{Eq-skew-mu-lambda2}
\begin{array}{lclcl}
s_{\mucomp/\lambda}&=&\sum_\kappa\ \langle s_{\mucomp/\lambda}\,,\,s_\kappa\rangle\ s_\kappa
&=&\sum_\kappa\ \langle s_\mucomp\,,\,s_\kappa\,s_\lambda\rangle\ s_\kappa\cr\cr
&=&\sum_\kappa\ \langle s_{(k^\ell)/\mu}\,,\,s_\kappa\,s_\lambda\rangle\ s_\kappa
&=&\sum_\kappa\ \langle s_{(k^\ell)}\,,\,s_\kappa\,s_\lambda\,s_\mu\rangle\ s_\kappa\cr\cr
&=&\sum_\kappa  \sum_\nu\ c_{\lambda\mu}^\nu\ \langle s_{(k^\ell)}\,,s_\kappa\,s_\nu\rangle\ s_\kappa
&=&\sum_\kappa  \sum_\nu\ c_{\lambda\mu}^\nu\ \langle s_{(k^\ell)/\nu}\,,s_\kappa\rangle\ s_\kappa\cr\cr
&=&\sum_\kappa  \sum_{\nu\subseteq(k^\ell)}\ c_{\lambda\mu}^\nu\ \langle s_{\nucomp}\,,s_\kappa\rangle\ s_\kappa
&=&\sum_{\nu\subseteq(k^\ell)}\ c_{\lambda\mu}^\nu\ s_{\nucomp}\,.\cr
\end{array}
\nonumber
\end{equation}
On comparing these two expressions, it can be seen that 
to evaluate the product $\sigma_\lambda\cdot\sigma_\mu$ of Schubert classes
given in \eqref{Eq-schubert-prod} one merely has to expand  
the skew Schur function $s_{\mucomp/\lambda}$ in terms of Schur functions.
To be more precise 
\begin{equation}
\label{Eq-schubert-prod-new}
   \sigma_\lambda \cdot \sigma_\mu 
   =\sum_\nu\ c_{\lambda,\nucomp}^{\mucomp}\ \sigma_\nu\,
\end{equation}
as proved also in~\cite{Gutschwager} and implicitly in \cite{ThomasYong}.

Moreover, the skew shape corresponding to
$\mucomp/\lambda$ is that of the diagram obtained
by placing the Young diagram of shape $\lambda$ in the top lefthand
corner of an $\ell\times k$ rectangle, and that of $\mu$ rotated through $180^\circ$
in the bottom righthand corner and deleting both sets of boxes from
the Young diagram of $(k^\ell)$. This is illustrated in the case $\ell=5$, $k=6$,
$\lambda=(5,2,1)$ and $\mu=(5,4,2)$ as shown below, where each $\lambda$
signifies a box of the Young diagram of shape $\lambda$, each $\mu$ a box of the
rotated Young diagram of shape $\mu$. 
\begin{equation}
\begin{array}{cccccc}
\lambda&\lambda&\lambda&\lambda&\lambda&\alpha\\
\lambda&\lambda&\alpha &\alpha &\alpha &\alpha\\
\lambda&\alpha &\alpha &\alpha &\mu    &\mu\\
\alpha &\alpha &\mu    &\mu    &\mu    &\mu\\
\alpha &\mu    &\mu    &\mu    &\mu    &\mu\\
\end{array}
\nonumber
\end{equation}
In a case, such as this, where the Young diagrams of shape
$\lambda$ and $\mu$ do not overlap, so that $s_{\mucomp/\lambda}$
is non-zero, the remaining boxes labelled $\alpha$ are those of the required skew
shape $\mucomp/\lambda$. In this example, we have 
$s_{\mucomp/\lambda}=s_{(6,6,4,2,1)/(5,2,1)}$, which in the
overlap notation of \eqref{Eq-sfn-overlap} is  $\{1,4,3,2,1\,|\,1,2,1,1\}$.

As a direct consequence of this, we are in a position to apply our previous results
to the question of what we shall call the Schubert positivity of 
differences of products of Schubert classes. In this connection
we make the following definition.

\begin{definition}
If an element $\sigma \in H^\ast(\Gr(\ell,\C^m),\Z)$ can be written as a non-negative
linear combination of Schubert classes then we say  that $\sigma$ is \emph{Schubert positive}.
\end{definition}

Then we have the following theorem on differences of products of Schubert classes.

\begin{theorem}
\label{The-schubert} 
Let $\sigma,\tau,\ov\sigma,\ov\tau$ satisfy Hypothesis~\ref{ovsigma}, and let
the partitions $\lambda,\mu,\kappa,\rho\subseteq(k^\ell)$ be such that
in the overlap notation of \eqref{Eq-skew-overlap}
either
\begin{equation}
\begin{array}{rcl} 
\mucomp/\lambda&=&(\sigma,a,b,\tau\,|\,\ov\sigma,1,\ov\tau)\cr
\rhocomp/\kappa&=&(\sigma,a+1,b-1,\tau\,|\,\ov\sigma,1,\ov\tau)\cr
\end{array}
\nonumber
\end{equation}
with $a\geq b\geq 2$, or
\begin{equation}
\begin{array}{rcl} 
\mucomp/\lambda&=&(\sigma,a^n,\tau\,|\,\ov\sigma,1^{n-1},\ov\tau)\cr
\rhocomp/\kappa&=&(\sigma,a+1,a^{n-2},a-1,\tau\,|\,\ov\sigma,1^{n-1},\ov\tau)\cr
\end{array}
\nonumber
\end{equation}
with $a\geq2$, then the product of Schubert classes in $H^\ast(\Gr(\ell,\C^{k+l}),\Z)$
is such that
\begin{equation}
                  \sigma_\lambda\cdot\sigma_\mu - \sigma_\kappa\cdot\sigma_\rho
                  \nonumber
\end{equation}
is Schubert positive.
\end{theorem}

\begin{proof} Theorem~\ref{big-posdifference} implies that in each case 
$s_{\mucomp/\lambda}-s_{\rhocomp/\kappa}$
is Schur positive, which is to say
$c_{\lambda,\nucomp}^{\mucomp}-c_{\kappa,\nucomp}^{\rhocomp}$
is non-negative for each $\nu$. The required result then follows
from \eqref{Eq-schubert-prod-new}.
\end{proof}

As a further application, it is instructive to consider 
the implications of our Theorem~\ref{positive-ribbon-theorem} 
on differences of ribbon Schur functions. 

\begin{theorem}
\label{theorem-schubert-posdifference}
Let the partitions $\lambda,\mu,\kappa,\rho\subseteq(k^\ell)$ be such that
$\mucomp/\lambda=(\alpha\,|\,1^{\ell-1})$ and 
$\rhocomp/\kappa=(\beta\,|\,1^{\ell-1})$ with
$\alpha$ and $\beta$ partitions of lengths $\ell(\alpha)=\ell(\beta)=\ell$ and 
 $|\alpha|=|\beta|=N\leq\ell+k-1$, such that $\alpha\dom\beta$. Then
the difference of products of Schubert classes 
\begin{equation}
   \sigma_\lambda\cdot\sigma_\mu - \sigma_\kappa\cdot\sigma_\rho
   \nonumber
\end{equation}
is Schubert positive in $H^\ast(\Gr(\ell,\C^{k+l}),\Z)$.
\end{theorem}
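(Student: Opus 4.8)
The plan is to treat this statement as the ribbon analogue of Theorem~\ref{The-schubert}, simply replacing the appeal to Theorem~\ref{big-posdifference} made there by an appeal to Theorem~\ref{positive-ribbon-theorem}. The first step is to recognise that the hypotheses $\mucomp/\lambda=(\alpha\,|\,1^{\ell-1})$ and $\rhocomp/\kappa=(\beta\,|\,1^{\ell-1})$ assert precisely that both skew shapes are ribbons occupying $\ell$ rows; hence, by the definition of a ribbon Schur function, $s_{\mucomp/\lambda}=r_\alpha$ and $s_{\rhocomp/\kappa}=r_\beta$.

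Next I would apply Theorem~\ref{positive-ribbon-theorem} to the pair $\alpha,\beta$. Both are partitions of $N$, and the hypotheses $\alpha\dom\beta$ and $\ell(\alpha)=\ell(\beta)=\ell$ are exactly those required, so the theorem yields that $r_\alpha-r_\beta$ is Schur positive. By the identification of the previous step, this says that $s_{\mucomp/\lambda}-s_{\rhocomp/\kappa}$ is Schur positive.

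The final step is to convert Schur positivity into Schubert positivity through \eqref{Eq-schubert-prod-new}, exactly as in the proof of Theorem~\ref{The-schubert}. Expanding $s_{\mucomp/\lambda}=\sum_\nu c_{\lambda,\nucomp}^{\mucomp}\,s_\nucomp$ and $s_{\rhocomp/\kappa}=\sum_\nu c_{\kappa,\nucomp}^{\rhocomp}\,s_\nucomp$, the Schur positivity just obtained gives $c_{\lambda,\nucomp}^{\mucomp}-c_{\kappa,\nucomp}^{\rhocomp}\ge0$ for every $\nu\subseteq(k^\ell)$. Since \eqref{Eq-schubert-prod-new} identifies these two coefficients with the respective coefficients of $\sigma_\nu$ in $\sigma_\lambda\cdot\sigma_\mu$ and $\sigma_\kappa\cdot\sigma_\rho$, every coefficient of $\sigma_\lambda\cdot\sigma_\mu-\sigma_\kappa\cdot\sigma_\rho$ is non-negative, which is the desired Schubert positivity.

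Given the groundwork already laid, no genuine obstacle remains; the proof is essentially a dictionary translation. The only point meriting attention is confirming that the hypotheses of Theorem~\ref{positive-ribbon-theorem} hold verbatim, in particular the equal-length condition $\ell(\alpha)=\ell(\beta)$, which by that theorem is necessary as well as sufficient (together with dominance) for $r_\alpha-r_\beta$ to be Schur positive. I would also note, for context, that the bound $N\le\ell+k-1$ is precisely the condition that a ribbon with $N$ boxes spanning $\ell$ rows, and therefore $N-\ell+1$ columns, fits inside the $\ell\times k$ rectangle; this guarantees that partitions $\lambda,\mu,\kappa,\rho\subseteq(k^\ell)$ realising the stated overlap data exist, but it plays no further role once the ribbon Schur functions have been identified.
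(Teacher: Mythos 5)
Your proposal is correct and follows essentially the same route as the paper: identify $s_{\mucomp/\lambda}=r_\alpha$ and $s_{\rhocomp/\kappa}=r_\beta$ as ribbon Schur functions, invoke Theorem~\ref{positive-ribbon-theorem} for Schur positivity of $r_\alpha-r_\beta$, and transfer coefficientwise non-negativity to Schubert classes via \eqref{Eq-schubert-prod-new}. The only cosmetic difference is that the paper phrases the coefficients as the descent-set numbers $d_{\nucomp\alpha}$ using Lemma~\ref{theorem-ribbon-schur}, whereas you work directly with the Littlewood--Richardson coefficients $c_{\lambda,\nucomp}^{\mucomp}$; these are the same numbers, so the arguments coincide.
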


\begin{proof}
Since $\mucomp/\lambda=(\alpha\,|\,1^{\ell-1})$ is a ribbon, it follows from 
Lemma~\ref{theorem-ribbon-schur} that
\begin{equation}
r_\alpha=s_{\mucomp/\lambda} 
=\sum_{\nu\subseteq(k^\ell)} c_{\lambda,\nucomp}^{\mucomp}\ s_{\nucomp}
=\sum_{\nu\subseteq(k^\ell)} d_{\nucomp\alpha}\ s_{\nucomp}\,.
\nonumber
\end{equation}
Using this in \eqref{Eq-schubert-prod-new} gives
\begin{equation}
\label{Eq-schubert-ribbon}
 \sigma_\lambda\cdot\sigma_\mu =\sum_{\nu\subseteq(k^l)} d_{\nucomp\alpha}\ \sigma_\nu\,.
\end{equation} 
A similar result applies to $\sigma_\kappa\cdot\sigma_\rho$, so that
from \eqref{Eq-schubert-prod-new} we obtain
\begin{equation}
\sigma_\lambda\cdot\sigma_\mu-\sigma_\kappa\cdot\sigma_\rho 
=\sum_{\nu\subseteq(k^\ell)}(d_{\nucomp\alpha}-d_{\nucomp\beta})\ \sigma_\nu \,.
\label{Eq-schubert-ribbon-diff}
\nonumber
\end{equation}
By hypothesis, $|\alpha|=|\beta|=N$, $\ell(\alpha)=\ell(\beta)=\ell$
and $\alpha\dom\beta$, so that Theorem~\ref{positive-ribbon-theorem} implies that 
\begin{equation}
\label{Eq-schur-ribbon-diff}
r_\alpha-r_\beta=\sum_\eta (d_{\eta\alpha}-d_{\eta\beta})\ s_\eta 
\nonumber
\end{equation}
is Schur positive, and thus $d_{\eta\alpha}-d_{\eta\beta}\geq0$ for all
$\eta$. This ensures, in turn, that 
$\sigma_\lambda\cdot\sigma_\mu-\sigma_\kappa\cdot\sigma_\rho$ is 
Schubert positive in $H^\ast(\Gr(\ell,\C^{k+\ell}),\Z)$.
\end{proof}

As a special case, consider $\alpha=(2^{n+2})$ and $\beta=(3,2^n,1)$.
Setting $\ell=n+2$ and $k=n+3$, so that $N=2n+4=\ell+k-1$ as required, we have 
$\lambda=\kappa=(n+1,n,\ldots,1)=:\delta_{n+1}$, the \emph{staircase partition of length} $n+1$,  
while $\mu=(n,n-1,\ldots,1)=\delta_n$ and $\nu=(n+2,n+1,\ldots,2)=\delta_{n+2}\backslash1$.
With this notation, the above Theorem~\ref{theorem-schubert-posdifference} implies that 
\begin{equation}
\label{Eq-delta}
         \sigma_{\delta_{n+1}}\cdot \sigma_{\delta_{n+1}} 
         - \sigma_{\delta_{n}}\cdot \sigma_{\delta_{n+2}\backslash1}
\end{equation}
is Schubert positive in $H^\ast(\Gr(n+2,\C^{2n+5}),\Z)$. For example, the $n=2$ 
case of this implies that
\begin{equation}
          \sigma_{(3,2,1)}\cdot\sigma_{(3,2,1)}-\sigma_{(2,1)}\cdot\sigma_{(4,3,2)}
\nonumber     
\end{equation}
is Schubert positive in $H^\ast(\Gr(4,\C^{9}),\Z)$,

Although the Schubert positivity of \eqref{Eq-delta} can also be established
by noting that \cite{LamPostnikovPylyavskyy}
\begin{equation}
         s_{\delta_{n+1}}\cdot s_{\delta_{n+1}} 
         - s_{\delta_{n}}\cdot s_{\delta_{n+2}\backslash1}
\end{equation}
is itself Schur positive, this property of Schur functions is 
not easy to derive.

Moreover, in contrast to this, even though
\begin{equation}
  s_{(4,3,2)}\cdot s_{(3,2,1)}-s_{(5,4,2)}\cdot s_{(3,1)}
  =- s_{(8,5,2)}\ +\ \cdots\ +\ s_{(4,3,3,2,2,1)}
\end{equation}
is not Schur  positive, the corresponding
product of Schubert classes
\begin{equation}
  \sigma_{(4,3,2)}\cdot\sigma_{(3,2,1)}-\sigma_{(5,4,2)}\cdot\sigma_{(3,1)}
\nonumber
\end{equation}
is Schubert positive in $H^\ast(\Gr(4,\C^{10}),\Z)$.
This
is a consequence of the fact that for $\ell=4$ and $k=6$ we know
\begin{equation}
   s_{(4,3,2)^c/(3,2,1)}-s_{(5,4,2)^c/(3,1)}=r_{(3,2,2,2)}-r_{(3,3,2,1)}
\end{equation}
is Schur positive, by Example~\ref{useful-ex}.

\section{Acknowledgements} The authors would like to thank Peter McNamara for 
sharing his conjecture with us and for helpful discussions, and  the referees for their constructive comments.

\end{document}